\theoremstyle{plain}
  \declaretheorem[numberwithin=section]{theorem}
  \declaretheorem[numberlike=theorem]{corollary}
  \declaretheorem[numberlike=theorem]{conjecture}
  \declaretheorem[numberlike=theorem]{question}
\theoremstyle{definition}
  \declaretheorem[numberlike=theorem]{example}
\newcommand{\comma}{{,}}
\newcommand{\nequiv}{\mathrel{\not\equiv}}
\newcommand{\email}[1]{{\textit{Email:} \texttt{#1}}}
\newcommand{\codeinline}[1]{{\frenchspacing\texttt{#1}}}
\newenvironment{codeblock}{\par\ttfamily\frenchspacing\medskip\parindent=0em}{\par\medskip}
\newcommand{\codein}[1]{\par\leftskip=1em\relax\rightskip=1em\relax{\color{brown!50!black}>>> }{\color{blue!50!black}#1}}
\newcommand{\codeout}[1]{\par\leftskip=2em\relax\rightskip=3em\relax#1}
\newcommand{\codeinout}[2]{\codein{#1}\codeout{#2}}
\begin{document}

\title{On congruence schemes for constant terms and their applications}

\author{Armin Straub\thanks{\email{straub@southalabama.edu}}\\
Department of Mathematics and Statistics\\
University of South Alabama}

\date{May 19, 2022}

\maketitle

\begin{abstract}
  Rowland and Zeilberger devised an approach to algorithmically determine the
  modulo $p^r$ reductions of values of combinatorial sequences representable
  as constant terms (building on work of Rowland and Yassawi). The resulting
  $p$-schemes are systems of recurrences and, depending on their shape, are
  classified as automatic or linear. We revisit this approach, provide some
  additional details such as bounding the number of states, and suggest a
  third natural type of scheme that combines benefits of automatic and linear
  ones. We illustrate the utility of these ``scaling'' schemes by confirming
  and extending a conjecture of Rowland and Yassawi on Motzkin numbers.
\end{abstract}

\section{Preliminaries}

\subsection{Introduction}

Throughout, let $p$ be a prime and denote with $\mathbb{Z}_p$ the $p$-adic
integers. If $A (n)$ is a sequence of $p$-adic integers with the property that
its ordinary generating function $\sum_{n \geq 0} A (n) x^n$ is algebraic
over $\mathbb{Z}_p (x)$, then, for any integer $r \geq 1$, the reductions
$A (n)$ modulo $p^r$ are $p$-automatic (that is, there exists a finite state
automaton which computes the values $A (n)$ modulo $p^r$ from the base $p$
digits of $n$; see Section~\ref{sec:intro:eg} for a simple example, and
\cite{as-auto-seq} for an introduction to automatic sequences in general). A
multivariate generalization of this result was proved by Christol, Kamae,
Mendes France and Rauzy \cite{ckmfr-80} in the case $r = 1$, while the
extension to $r \geq 1$ is due to Denef and Lipshitz \cite{dl-diag}.
Based on the proof in \cite{dl-diag}, Rowland and Yassawi \cite{ry-diag13}
provided a constructive proof of the following result.

\begin{theorem}[{\cite[Theorem~2.1]{ry-diag13}}]
  \label{thm:diag:ry}Suppose that $A (n)$ is a sequence of $p$-adic integers
  that can be represented as the diagonal of a multivariate rational function
  in $\mathbb{Z}_p (x_1, x_2, \ldots, x_d)$. Then, for any integer $r
  \geq 1$, the reductions $A (n)$ modulo $p^r$ are $p$-automatic.
\end{theorem}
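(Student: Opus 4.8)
The plan is to realize $A(n)$ as a constant term and then to show that the base-$p$ ``decimations'' of this representation range over a finite set modulo $p^r$; by Eilenberg's kernel criterion this is precisely $p$-automaticity. Writing $\mathbf{x}=(x_1,\dots,x_d)$ and $\mathrm{CT}$ for the operator extracting the coefficient of $x_1^0\cdots x_d^0$, the diagonal of $F=P/Q\in\mathbb{Z}_p(\mathbf{x})$ can be written
\[
  A(n)=\mathrm{CT}\!\left[F(\mathbf{x})\,u(\mathbf{x})^n\right],\qquad u(\mathbf{x})=(x_1x_2\cdots x_d)^{-1},
\]
since the coefficient of $\mathbf{x}^{(n,\dots,n)}$ in $F$ is exactly the constant term of $F\,u^n$. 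Here $F$ is expanded as a formal Laurent series in a suitable domain, and the whole argument takes place in the ring of such series over $\mathbb{Z}_p$.

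Next I would introduce the section (Cartier) operator $\Lambda_{\mathbf{0}}$ sending $\sum_{\mathbf{k}}a_{\mathbf{k}}\mathbf{x}^{\mathbf{k}}$ to $\sum_{\mathbf{k}}a_{p\mathbf{k}}\mathbf{x}^{\mathbf{k}}$. Two facts drive everything: it is semilinear over $p$-th powers, $\Lambda_{\mathbf{0}}\!\left(f(\mathbf{x})\,g(\mathbf{x}^p)\right)=g(\mathbf{x})\,\Lambda_{\mathbf{0}}f(\mathbf{x})$, and it fixes constant terms, so $\mathrm{CT}[G(\mathbf{x})H(\mathbf{x}^p)]=\mathrm{CT}[(\Lambda_{\mathbf{0}}G)\,H]$. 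Because $u$ is a monomial, $u^{p}=u(\mathbf{x}^p)$ exactly, whence $u^{pn+j}=u^j\,(u^n)(\mathbf{x}^p)$. Splitting $A(pn+j)=\mathrm{CT}[\,Fu^j\cdot(u^n)(\mathbf{x}^p)\,]$ and applying the constant-term identity gives
\[
  A(pn+j)=\mathrm{CT}\!\left[F_j\,u^n\right],\qquad F_j=\Lambda_{\mathbf{0}}\!\left(F\,u^j\right).
\]
Thus every subsequence $A(p^e n+c)$ has the same shape with $F$ replaced by an iterated section $\Lambda_{\mathbf{0}}(\cdots\Lambda_{\mathbf{0}}(Fu^{j_1})\cdots u^{j_e})$, and the task reduces to showing that these iterated sections form a finite set modulo $p^r$.

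For $r=1$ this is clean. By Frobenius, $Q^p\equiv Q(\mathbf{x}^p)\pmod p$, so $F=PQ^{p-1}/Q^p\equiv PQ^{p-1}/Q(\mathbf{x}^p)$, and semilinearity pulls the pure $p$-th-power denominator out of the section:
\[
  F_j\equiv \frac{\Lambda_{\mathbf{0}}\!\left(P\,Q^{p-1}u^j\right)}{Q}\pmod p.
\]
The denominator is therefore \emph{stable}: every iterated section is $N/Q$ with $N$ a Laurent polynomial evolving by $N\mapsto \Lambda_{\mathbf{0}}(N\,Q^{p-1}u^j)$. Since $\Lambda_{\mathbf{0}}$ contracts the support of $N$ by the factor $p>1$ while multiplication by $Q^{p-1}u^j$ enlarges it only by a fixed amount, the supports eventually enter and never leave a fixed bounded box; together with the finitely many residues modulo $p$ this leaves only finitely many numerators. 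The automaton then has these $N$ as states, transitions $N\xrightarrow{\,j\,}\Lambda_{\mathbf{0}}(NQ^{p-1}u^j)$, and output $\mathrm{CT}[N/Q]\bmod p$.

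For $r\ge 1$ the Frobenius identity holds only modulo $p$, and this is where the main difficulty lies. Writing $Q^p=Q(\mathbf{x}^p)+pE$ and expanding the geometric series, which truncates modulo $p^r$,
\[
  \frac1Q=\frac{Q^{p-1}}{Q(\mathbf{x}^p)+pE}\equiv \frac{Q^{p-1}}{Q(\mathbf{x}^p)}\sum_{i=0}^{r-1}\left(\frac{-pE}{Q(\mathbf{x}^p)}\right)^{\!i}\pmod{p^r},
\]
one again obtains a representation whose denominator is a power of the $p$-th-power function $Q(\mathbf{x}^p)$, so $\Lambda_{\mathbf{0}}$ pulls it out as before. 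The obstacle is that this raises the power of $Q$ in the denominator, and naive iteration lets that power grow; the heart of the Denef--Lipshitz argument is to organize the expansion so that the relevant functions stay inside a single finitely generated $\mathbb{Z}/p^r$-module of the form $\{N/Q^{s}:N\text{ in a fixed box}\}$ closed under all operators $F\mapsto\Lambda_{\mathbf{0}}(Fu^j)$. Granting this finiteness, the states are the finitely many reduced numerators modulo $p^r$, the transitions are the digit operators, and the output at each state is $\mathrm{CT}$ of the corresponding function modulo $p^r$; reading the base-$p$ digits of $n$ then computes $A(n)\bmod p^r$, which is the asserted automaticity. Making this module control fully explicit, and thereby bounding the number of states, is exactly the quantitative point the present paper takes up.
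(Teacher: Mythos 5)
Your reduction of the diagonal to a constant term, the decimation identity $A(pn+j)=\operatorname{ct}[\Lambda_p[Fu^j]\,u^n]$, and the support-contraction finiteness argument for $r=1$ are all sound, and they follow the same route as the proof this paper cites: note that the paper does not itself prove Theorem~\ref{thm:diag:ry} --- it quotes it from \cite{ry-diag13} --- and develops the analogous machinery only for constant terms $\operatorname{ct}[P(\boldsymbol{x})^n Q(\boldsymbol{x})]$ in Section~\ref{sec:schemes}, where no denominator arises. The genuine gap in your proposal is exactly where you flag it yourself: for $r\geq 2$ you expand $1/Q$ as a geometric series in $pE/Q(\boldsymbol{x}^p)$, observe that iterating $F\mapsto\Lambda_p[Fu^j]$ then drives up the power of $Q$ in the denominator, and ``grant'' the existence of a finitely generated invariant module. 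That finiteness is the entire content of the theorem for $r\geq 2$ --- it is precisely where Denef--Lipshitz and Rowland--Yassawi do their real work --- so as written your argument establishes only the case $r=1$.

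The missing idea is to replace ``Frobenius plus error term'' by the congruence \eqref{eq:power:pr}, that is $Q(\boldsymbol{x})^{p^r}\equiv Q(\boldsymbol{x}^p)^{p^{r-1}}\pmod{p^r}$ (see \cite[Proposition~1.9]{ry-diag13}), applied to the denominator. Assuming the normalization that $Q(\boldsymbol{0})$ is a $p$-adic unit, so that $Q$ is invertible modulo $p^r$ in the power series ring (a point your sketch also leaves implicit), one has
\begin{equation*}
  \frac{1}{Q}=\frac{Q^{p^r-1}}{Q^{p^r}}\equiv\frac{Q^{p^r-1}}{Q(\boldsymbol{x}^p)^{p^{r-1}}}\pmod{p^r},
\end{equation*}
so every iterate can be kept in the \emph{fixed} form $N/Q^{p^{r-1}}$ with $N$ a Laurent polynomial: by semilinearity of $\Lambda_p$,
\begin{equation*}
  \Lambda_p\!\left[\frac{N\,u^j}{Q^{p^{r-1}}}\right]=\Lambda_p\!\left[\frac{N\,Q^{(p-1)p^{r-1}}u^j}{Q^{p^r}}\right]\equiv\Lambda_p\!\left[\frac{N\,Q^{(p-1)p^{r-1}}u^j}{Q(\boldsymbol{x}^p)^{p^{r-1}}}\right]=\frac{\Lambda_p\bigl[N\,Q^{(p-1)p^{r-1}}u^j\bigr]}{Q^{p^{r-1}}}\pmod{p^r},
\end{equation*}
and the exponent $p^{r-1}$ never grows under iteration. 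Your own $r=1$ contraction argument then applies verbatim to the numerator map $N\mapsto\Lambda_p[N\,Q^{(p-1)p^{r-1}}u^j]$: the Cartier operator shrinks supports by a factor of $p$ while multiplication by the fixed Laurent polynomial $Q^{(p-1)p^{r-1}}u^j$ enlarges them by a bounded amount, so all numerators eventually lie in one fixed box, and the finitely many coefficient residues modulo $p^r$ give finitely many states. This stabilized-denominator device is how \cite{ry-diag13} actually prove the theorem, and it is the same mechanism by which the present paper limits the $P_i$ to at most $r$ possibilities and bounds $\operatorname{dg}(Q_i)$ in the proof of Theorem~\ref{thm:scheme:bound} --- which is exactly the quantitative control you correctly identify, but do not supply, at the end of your proposal.
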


Here, the diagonal of a rational function in $\mathbb{Z}_p (x_1, x_2, \ldots,
x_d)$ with power series
\begin{equation*}
  \sum_{n_1, n_2, \ldots, n_d \geq 0} c (n_1, n_2, \ldots, n_d)
   x_1^{n_1} \cdots x_d^{n_d}
\end{equation*}
is the (univariate) sequence $c (n, n, \ldots, n)$. Bostan, Lairez and Salvy
\cite{bls-mbs} recently showed that the diagonals of rational functions in
$\mathbb{Z} (x_1, x_2, \ldots, x_d)$ are precisely those sequences expressible
as multiple binomial sums. A conjecture of Christol \cite{christol-glob}
suggests that every integer sequence, which grows at most exponentially and
which satisfies a linear recursion with polynomial coefficients, is of this
form. This illustrates that Theorem~\ref{thm:diag:ry} applies to a large class
of the sequences naturally arising in combinatorics.

Under the assumptions of Theorem~\ref{thm:diag:ry}, Rowland and Yassawi
\cite{ry-diag13} described practical algorithms to compute a finite state
automaton that encodes the values $A (n)$ modulo $p^r$ and applied these to a
wide variety of combinatorial sequences, obtaining a host of fascinating and
inspiring conjectures as well as elegantly reproving known results.
Subsequently, Rowland and Zeilberger \cite{rz-cong} provided a similar
algorithm, as well as a clever and useful new variation, for the case of
sequences $A (n)$ expressible as constant terms, meaning that
\begin{equation}
  A (n) = \operatorname{ct} [P (\boldsymbol{x})^n Q (\boldsymbol{x})], \label{eq:ct:pq}
\end{equation}
where $P, Q \in \mathbb{Z} [\boldsymbol{x}^{\pm 1}]$ are Laurent polynomials in
$\boldsymbol{x}= (x_1, \ldots, x_d)$. We revisit this approach in
Section~\ref{sec:schemes} and provide some additional details such as bounding
the number of states in Theorem~\ref{thm:scheme:bound}, resulting in bounds
that are similar to those obtained by Rowland and Yassawi \cite{ry-diag13}
for the case of diagonals of rational functions.

The two algorithms of Rowland and Zeilberger result in systems of recurrences,
called (congruence) $p$-schemes, which depending on their shape are classified
as \emph{linear} or \emph{automatic} (where an automatic $p$-scheme is a
linear $p$-scheme that is equivalent to a finite state automaton). We add a
third special type of $p$-scheme which we call \emph{scaling} and which
naturally lies between the two. For certain purposes, these scaling schemes
combine benefits of automatic schemes and linear schemes: scaling schemes are
(nearly) as easily analyzable as automatic ones, while their number of states
and computational cost are often drastically reduced, much like for linear
schemes.

If a sequence of constant terms $A (n)$ has $p$-adic valuation bounded by $r$,
then the sequence of $p$-adic valuations of $A (n)$ is $p$-automatic as well,
and a $p$-scheme for the valuations can be easily extracted from an automatic
$p$-scheme for the values of $A (n)$ modulo $p^r$. We discuss this observation
in Section~\ref{sec:valuations} and reprove in Theorem~\ref{thm:motzkin:v2} a
result classifying the $2$-adic valuation of Motzkin numbers that was
conjectured by Amdeberhan, Deutsch and Sagan \cite[Conjecture~5.5]{ds-cong}
and proven by Eu, Liu and Yeh \cite{ely-mod8}. We further observe that
scaling $p$-schemes are particularly well suited for the purpose of studying
$p$-adic valuations. As an application, we consider an open question of
Rowland and Yassawi \cite{ry-diag13} that asks whether there exist
infinitely many primes $p$ such that $p^2$ never divides any Motzkin number $M
(n)$. By computing congruence automata, Rowland and Yassawi showed that $p =
5$ and $p = 13$ are two such primes, and they conjectured that $31, 37, 61$
are such primes as well. We prove in Theorem~\ref{thm:motzkin:p2} that their
conjecture is true and extend it to all primes below $200$, resulting in three
additional primes with that property.

In order to perform the computations required for
Theorem~\ref{thm:motzkin:p2}, we implemented the algorithm described in
Section~\ref{sec:schemes} in the open-source computer algebra system Sage
\cite{sage2021}. This implementation is introduced in Section~\ref{sec:cas},
followed by several examples and applications which reproduce and extend
interesting computations and conjectures from \cite{ry-diag13} and
\cite{rz-cong}.

Finally, in Section~\ref{sec:conclusion}, we conclude with further motivation
for seeking means to efficiently compute congruence schemes. In particular, we
indicate open problems which show that, even in the case of the very
well-studied Catalan numbers, intriguing new questions reveal themselves by
studying congruence schemes.

\subsection{Introductory examples}\label{sec:intro:eg}

The Catalan numbers
\begin{equation}
  C (n) = \frac{1}{n + 1} \binom{2 n}{n} = \binom{2 n}{n} - \binom{2 n}{n - 1}
  \label{eq:catalan}
\end{equation}
play a fundamental role \cite{stanley-catalan} in combinatorics and have
numerous combinatorial interpretations. It follows immediately from the latter
representation in \eqref{eq:catalan} that the Catalan numbers have the
constant term expression
\begin{equation}
  C (n) = \operatorname{ct} [(x^{- 1} + 2 + x)^n (1 - x)] . \label{eq:catalan:ct}
\end{equation}
Based on this constant term expression (or an equivalent representation as the
diagonal of a rational function), the algorithms of Rowland and Yassawi
\cite{ry-diag13} and of Rowland and Zeilberger \cite{rz-cong} can be used
to construct finite state automata that describe the Catalan numbers modulo
any fixed prime power.

\begin{example}
  \label{eg:catalan:3}Figure~\ref{fig:catalan:3}, which is taken from
  \cite{henningsen-msc}, shows such a finite state automaton for the Catalan
  numbers $C (n)$ modulo $3$. For instance, since $35$ has the representation
  $1022$ in base $3$, to compute $C (35)$ modulo $3$, we begin at the marked
  initial node and follow the arrows labeled $2$, 2, $0$ and $1$ corresponding
  to the digits of $35$ in base $3$. After these four transitions, we are at
  the top-right node whose label $1$ tells us that $C (35) \equiv 1$ modulo
  $3$ (without computing that $C (35) = 3 \comma 116 \comma 285 \comma 494
  \comma 907 \comma 301 \comma 262$). We note that a more transparent
  characterization can be obtained for Catalan numbers modulo any prime $p$
  through generalized Lucas congruences \cite{hs-lucas-x}.
  
  \begin{figure}[h]
    \begin{center}
    \begin{tikzpicture}[shorten >=1pt,node distance=2cm,on grid,auto] 
       \node[state,initial] (q_0)   {$1$}; 
       \node[state] (q_1) [below right=of q_0] {$0$}; 
       \node[state] (q_2) [above right=of q_1] {$1$}; 
       \node[state] (q_3) [below right=of q_1] {$2$};
       \node[state] (q_4) [below left=of q_1] {$2$};
        \path[->] 
        (q_0) edge[bend left] node {$0,1$} (q_2)
        (q_0) edge[bend right] node {$2$} (q_4)
        (q_1) edge[loop above] node {$0,1,2$} ()
        (q_2) edge[loop above] node {$0$} ()
        (q_2) edge node {$2$} (q_1)
        (q_2) edge[bend left] node {$1$} (q_3)
        (q_3) edge node {$2$} (q_1)
        (q_3) edge[loop below] node {$0$} ()
        (q_3) edge node {$1$} (q_2)
        (q_4) edge node {$1$} (q_1)
        (q_4) edge[loop below] node {$2$} ()
        (q_4) edge[bend right] node {$0$} (q_3);
    \end{tikzpicture}

    \caption{\label{fig:catalan:3}Congruence automaton for Catalan numbers
    modulo $3$}
  \end{center}
  \end{figure}
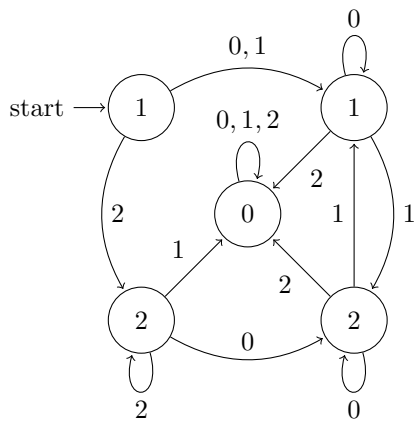
\end{example}

Similar to the representation \eqref{eq:catalan:ct} for the Catalan numbers,
the well-known sequence of Motzkin numbers $M (n)$ has the constant term
representation
\begin{equation}
  M (n) = \operatorname{ct} [(x^{- 1} + 1 + x)^n (1 - x^2)], \label{eq:motzkin:ct}
\end{equation}
which we will employ in the sequel as well.

As indicated in \cite{rz-cong}, any binomial coefficient sum of a certain
kind can be transformed into a constant term representation. Famous instances
of such sequences include the two Ap\'ery sequences
\begin{align}
  B (n) = \sum_{k = 0}^n \binom{n}{k}^2 \binom{n + k}{k} & = \operatorname{ct}
  \left[ \frac{(x + 1) (x + y) (x + y + 1)}{x y} \right]^n, \nonumber\\
  A (n) = \sum_{k = 0}^n \binom{n}{k}^2 \binom{n + k}{k}^2 & = \operatorname{ct}
  \left[ \frac{(x + y) (z + 1) (x + y + z) (y + z + 1)}{x y z} \right]^n, 
  \label{eq:apery3:ct}
\end{align}
which are the fundamental ingredients in Ap\'ery's proofs \cite{apery},
\cite{alf} of the irrationality of $\zeta (2)$ and $\zeta (3)$,
respectively.

As a final example, we note that Gorodetsky \cite{gorodetsky-ct} recently
obtained particularly nice constant term representations for all
Ap\'ery-like sporadic sequences, allowing him to uniformly derive certain
congruential properties.

\subsection{Notation}

In the sequel, we will use the vector notation $\boldsymbol{x}= (x_1, \ldots,
x_d)$ and write, for instance, $\mathbb{Q} [\boldsymbol{x}^{\pm 1}] =\mathbb{Q}
[x_1^{\pm 1}, \ldots, x_d^{\pm 1}]$ for the ring of Laurent polynomials in $d$
variables with rational coefficients. We denote monomials as
$\boldsymbol{x}^{\boldsymbol{k}} = x_1^{k_1} \cdots x_d^{k_d}$, where
$\boldsymbol{k}= (k_1, \ldots, k_d)$ is the exponent vector.

We denote with $\Lambda_p$ the Cartier operator
\begin{equation*}
  \Lambda_p \left[ \sum_{\boldsymbol{k} \in \mathbb{Z}^d} a_{\boldsymbol{k}}
   \boldsymbol{x}^{\boldsymbol{k}} \right] = \sum_{\boldsymbol{k} \in \mathbb{Z}^d}
   a_{p\boldsymbol{k}} \boldsymbol{x}^{\boldsymbol{k}} .
\end{equation*}
Observe that, if $A (n) = \operatorname{ct} [P (\boldsymbol{x})^n Q (\boldsymbol{x})]$,
where $P, Q \in \mathbb{Z} [\boldsymbol{x}^{\pm 1}]$ are Laurent polynomials in
$\boldsymbol{x}= (x_1, \ldots, x_d)$, then
\begin{align}
  A (p n + k) & = \operatorname{ct} [P (\boldsymbol{x})^{p n + k} Q (\boldsymbol{x})]
  \nonumber\\
  & \equiv \operatorname{ct} [P (\boldsymbol{x}^p)^n P (\boldsymbol{x})^k Q
  (\boldsymbol{x})] \pmod{p} \nonumber\\
  & = \operatorname{ct} [P (\boldsymbol{x})^n \Lambda_p [P (\boldsymbol{x})^k Q
  (\boldsymbol{x})]],  \label{eq:ct:pnk}
\end{align}
where we used that $P (\boldsymbol{x})^{p n} \equiv P (\boldsymbol{x}^p)^n$ modulo
$p$ (see congruence \eqref{eq:power:pr} for a generalization modulo $p^r$).
For the final equality note that $\operatorname{ct} [f (\boldsymbol{x}^p) g
(\boldsymbol{x})] = \operatorname{ct} [f (\boldsymbol{x}) \Lambda_p [g (\boldsymbol{x})]]$
for any $f, g \in \mathbb{Z} [\boldsymbol{x}^{\pm 1}]$ because a term
$a_{\boldsymbol{k}} \boldsymbol{x}^{\boldsymbol{k}}$ of $g (\boldsymbol{x})$ can
contribute to the constant term only if (each component of) the exponent
$\boldsymbol{k}= (k_1, \ldots, k_d)$ is divisible by $p$ (since the latter is true
for each term of $f (\boldsymbol{x}^p)$).

\section{Congruence schemes}\label{sec:schemes}

\subsection{Linear and automatic congruence schemes}

Let $A : \mathbb{Z}_{\geq 0} \rightarrow R$ be a sequence with values in
a ring $R$. Following \cite{rz-cong}, we say that a \emph{linear
$p$-scheme} for $A (n)$ consists of sequences $A_0, A_1, \ldots, A_m :
\mathbb{Z}_{\geq 0} \rightarrow R$ with $A_0 = A$ such that, for all $i
\in \{ 0, 1, \ldots, m \}$, $k \in \{ 0, 1, \ldots, p - 1 \}$ and $n \geq
0$,
\begin{equation}
  A_i (p n + k) = \sum_{j = 0}^m \alpha_{i, j}^{(k)} A_j (n)
  \label{eq:linearscheme}
\end{equation}
for some $\alpha_{i, j}^{(k)} \in R$. Note that the linear $p$-scheme,
including the values of all involved sequences, is determined by the
transition coefficients $\alpha_{i, j}^{(k)}$ together with the initial
conditions $c_i = A_i (0)$. In the sequel, we refer to the $A_i$ as the
\emph{states} of the $p$-scheme. In particular, $m + 1$ is the number of
states of the $p$-scheme.

We note that $A (n)$ can be described by a linear $p$-scheme if and only if $A
(n)$ is $p$-regular \cite{as-k-regular}. In the case where $R$ is finite (in
this paper, we only consider the case where $R =\mathbb{Z}/ p^r \mathbb{Z}$
for some $r \geq 1$), these sequences are precisely the $p$-automatic
ones.

\begin{example}
  \label{eg:catalan:3:linear}There exists a linear $3$-scheme for the Catalan
  numbers $C (n)$ modulo $3$ with two states $A_0, A_1 : \mathbb{N}
  \rightarrow \mathbb{Z}/ 3\mathbb{Z}$ and the following transitions:
  \begin{equation*}\arraycolsep=2pt
    \begin{array}{rll}
       A_0 (3 n) & = & A_0 (n) + A_1 (n)\\
       A_0 (3 n + 1) & = & A_0 (n) + A_1 (n)\\
       A_0 (3 n + 2) & = & 2 A_0 (n) + A_1 (n)
     \end{array} \qquad \begin{array}{rll}
       A_1 (3 n) & = & 0\\
       A_1 (3 n + 1) & = & A_0 (n) + A_1 (n)\\
       A_1 (3 n + 2) & = & A_0 (n) + 2 A_1 (n)
     \end{array}
  \end{equation*}
  Together with the initial conditions
  \begin{equation*}
    A_0 (0) = 1, \quad A_1 (0) = 0,
  \end{equation*}
  the above transitions uniquely describe all the values taken by the
  sequences $A_0$, $A_1$ and, therefore, the Catalan numbers $C (n)$ modulo
  $3$. For instance, to determine $C (35)$ modulo $3$, as in
  Example~\ref{eg:catalan:3}, we compute
  \begin{equation*}
    A_0 (35) = 2 A_0 (11) + A_1 (11) = 2 A_0 (3) + A_1 (3) = 2 A_0 (1) + 2
     A_1 (1) = A_0 (0) + A_1 (0) = 1
  \end{equation*}
  which confirms that $C (35) \equiv 1$ modulo $3$. We note that the above
  scheme is equivalent to the one given in \cite[Example~1.1]{hs-lucas-x}
  though for the latter $A_0 (n) + A_1 (n)$ is chosen as the second
  state.
\end{example}

A linear $p$-scheme is called an \emph{automatic $p$-scheme} if, for all
$i$, the right-hand side of \eqref{eq:linearscheme} is either $0$ or of the
form $A_{\sigma (k, i)} (n)$ for some $\sigma (k, i)$ (that is, $\alpha_{i,
j}^{(k)} = 0$ if $j \neq \sigma (k, i)$ and $\alpha_{i, j}^{(k)} = 1$ if $j =
\sigma (k, i)$). As indicated in Example~\ref{eg:catalan:3:automatic} below,
an automatic $p$-scheme is equivalent to a finite state automaton describing
the sequence $A (n)$. Note that we find it convenient in practice to allow $0$
as a right-hand side of \eqref{eq:linearscheme} though one could certainly
disallow this possibility at the potential cost of introducing an additional
state representing the zero sequence.

\begin{example}
  \label{eg:catalan:3:automatic}The $3$-scheme in
  Example~\ref{eg:catalan:3:linear} is not automatic (if it were then, for
  instance, the right-hand side of $A_0 (3 n) = A_0 (n) + A_1 (n)$ would have
  to equal one of $A_0 (n)$, $A_1 (n)$, or $0$; neither of these is the case
  as we can easily see directly or by computing the first few terms). However,
  at the cost of increasing the number of states from two to four, an
  equivalent automatic $3$-scheme for the Catalan numbers $C (n)$ modulo $3$
  can be obtained as:
  \begin{equation*}\arraycolsep=2pt
    \begin{array}{rll}
       A_0 (3 n) & = & A_1 (n)\\
       A_0 (3 n + 1) & = & A_1 (n)\\
       A_0 (3 n + 2) & = & A_2 (n)\\
       A_1 (3 n) & = & A_1 (n)\\
       A_1 (3 n + 1) & = & A_3 (n)\\
       A_1 (3 n + 2) & = & 0
     \end{array} \qquad \begin{array}{rll}
       A_2 (3 n) & = & A_3 (n)\\
       A_2 (3 n + 1) & = & 0\\
       A_2 (3 n + 2) & = & A_2 (n)\\
       A_3 (3 n) & = & A_3 (n)\\
       A_3 (3 n + 1) & = & A_1 (n)\\
       A_3 (3 n + 2) & = & 0
     \end{array}
  \end{equation*}
  with initial conditions
  \begin{equation*}
    A_0 (0) = 1, \quad A_1 (0) = 1, \quad A_2 (0) = 2, \quad A_3 (0) = 2.
  \end{equation*}
  The corresponding finite state automaton matches Figure~\ref{fig:catalan:3}
  from Example~\ref{eg:catalan:3}, where $A_0$ is the initial node, $A_1$ is
  the top-right node, $A_2$ is the bottom-left node, and $A_3$ the
  bottom-right node. We note that the finite state automaton features a fifth
  node that explicitly represents the zero state (slight caution is needed
  when referring to the number of states as these might differ by one: the
  $p$-scheme has four states while the corresponding automaton has five
  states).
\end{example}

\subsection{Scaling schemes}

As somewhat illustrated by Examples~\ref{eg:catalan:3:linear} and
\ref{eg:catalan:3:automatic}, linear schemes typically require substantially
fewer states than corresponding automatic schemes, which can make them
considerably less costly to compute. On the other hand, automatic schemes have
the advantage of typically being much easier to analyze. For instance, an
automatic scheme makes it trivial to determine which values are obtained by
the underlying sequence: namely, these are precisely the initial conditions
(assuming that each node in the corresponding finite state automaton is
reachable from the initial node, which is always the case when following the
construction in \cite{rz-cong} which is summarized below). On the other
hand, it can be computationally expensive to extract this information from a
linear scheme.

Aiming to combine the benefits of automatic and linear schemes, we consider
schemes with the property that, for all $i$, the right-hand side of
\eqref{eq:linearscheme} consists of at most one term (that is, for each $k$
and $i$, there is at most one $j$ such that $\alpha_{i, j}^{(k)} \neq 0$). We
refer to these as \emph{scaling schemes}.

\begin{example}
  \label{eg:catalan:3:multiplicative}Continuing
  Examples~\ref{eg:catalan:3:linear} and \ref{eg:catalan:3:automatic}, the
  following defines a scaling $3$-scheme for the Catalan numbers $C (n)$
  modulo $3$:
  \begin{equation*}\arraycolsep=2pt
    \begin{array}{rll}
       A_0 (3 n) & = & A_1 (n)\\
       A_0 (3 n + 1) & = & A_1 (n)\\
       A_0 (3 n + 2) & = & 2 A_2 (n)
     \end{array} \qquad \begin{array}{rll}
       A_1 (3 n) & = & A_1 (n)\\
       A_1 (3 n + 1) & = & 2 A_1 (n)\\
       A_1 (3 n + 2) & = & 0
     \end{array} \qquad \begin{array}{rll}
       A_2 (3 n) & = & A_1 (n)\\
       A_2 (3 n + 1) & = & 0\\
       A_2 (3 n + 2) & = & A_2 (n)
     \end{array}
  \end{equation*}
  with initial conditions
  \begin{equation*}
    A_0 (0) = 1, \quad A_1 (0) = 1, \quad A_2 (0) = 1.
  \end{equation*}
  We observe that it is straightforward to convert from a scaling scheme to an
  automatic one, and vice versa. To wit, let $B_0, B_1, B_2, B_3$ denote the
  four states of the automatic scheme from
  Example~\ref{eg:catalan:3:automatic}. Then $B_0 = A_0$, $B_1 = A_1$, $B_2 =
  2 A_2$, $B_3 = 2 A_1$.
\end{example}

\subsection{An algorithm to compute congruence schemes}\label{sec:schemes:alg}

Rowland and Zeilberger \cite{rz-cong} offer the following algorithm to
produce linear $p$-schemes for a sequence $A (n)$ represented as the constant
terms $A (n) = \operatorname{ct} [P (\boldsymbol{x})^n Q (\boldsymbol{x})]$ defined over
$R =\mathbb{Z}/ p^r$, the integers modulo $p^r$. Since we are working over the
ring $R$, all corresponding equalities below are to be understood as
congruences modulo $p^r$.

Starting with the state $A_0 (n) = \operatorname{ct} [P (\boldsymbol{x})^n Q
(\boldsymbol{x})]$, we iteratively build a collection $A_0, A_1, \ldots$ of
states $A_i (n) = \operatorname{ct} [P_i (\boldsymbol{x})^n Q_i (\boldsymbol{x})]$ as
follows. For each state $A_i$ and for each $k \in \{ 0, 1, \ldots, p - 1 \}$,
we either express $A_i (p n + k)$ in terms of existing states or we add a new
state. More precisely, to begin with, we write
\begin{equation}
  A_i (p n + k) = \operatorname{ct} [\hat{P} (\boldsymbol{x})^n \hat{Q} (\boldsymbol{x})],
  \label{eq:automaton:step}
\end{equation}
with $\hat{P}, \hat{Q}$ obtained as follows: let $\tilde{P} (\boldsymbol{x}) =
P_i (\boldsymbol{x})^p$ and $\tilde{Q} (\boldsymbol{x}) = P_i (\boldsymbol{x})^k Q_i
(\boldsymbol{x})$. If $\tilde{P} (\boldsymbol{x}) = \hat{P} (\boldsymbol{x}^p)$ for
some $\hat{P}$, then, using \eqref{eq:ct:pnk},
\begin{align*}
  A_i (p n + k) & = \operatorname{ct} [P_i (\boldsymbol{x})^{p n} P_i (\boldsymbol{x})^k
  Q_i (\boldsymbol{x})] = \operatorname{ct} [\tilde{P} (\boldsymbol{x})^n \tilde{Q}
  (\boldsymbol{x})]\\
  & =\operatorname{ct} [\hat{P} (\boldsymbol{x}^p)^n \tilde{Q} (\boldsymbol{x})] =
  \operatorname{ct} [\hat{P} (\boldsymbol{x})^n \Lambda_p [\tilde{Q} (\boldsymbol{x})]] =
  \operatorname{ct} [\hat{P} (\boldsymbol{x})^n \hat{Q} (\boldsymbol{x})]
\end{align*}
for $\hat{Q} (\boldsymbol{x}) = \Lambda_p [\tilde{Q} (\boldsymbol{x})]$.
Otherwise, we let $\hat{P} = \tilde{P}$ and $\hat{Q} = \tilde{Q}$. If the
right-hand side of \eqref{eq:automaton:step}, that is $\operatorname{ct} [\hat{P}
(\boldsymbol{x})^n \hat{Q} (\boldsymbol{x})]$, can be written as a linear
combination of existing states $A_j (n)$ (of the form $A_j (n) = \operatorname{ct}
[\hat{P} (\boldsymbol{x})^n Q_j (\boldsymbol{x})]$), then we move on to the next
value of $k$ (or to the next state $A_{i + 1}$). On the other hand, if
$\operatorname{ct} [\hat{P} (\boldsymbol{x})^n \hat{Q} (\boldsymbol{x})]$ cannot be
written as a linear combination of existing states $A_j (n)$, then we add
$\operatorname{ct} [\hat{P} (\boldsymbol{x})^n \hat{Q} (\boldsymbol{x})]$ as a new state
to our collection of states. In either case, $A_i (p n + k)$ can now be
expressed as in \eqref{eq:linearscheme} as a linear combination of states. If
this algorithm terminates, it therefore results in a linear $p$-scheme.

To see that the algorithm always terminates, first note that there are at most
$r$ different polynomials $P_i$ (which, by construction, are essentially of
the form $P (\boldsymbol{x})^{p^s}$ for some $s$) involved in the states $A_i
(n) = \operatorname{ct} [P_i (\boldsymbol{x})^n Q_i (\boldsymbol{x})]$ because, for any
Laurent polynomial $F \in \mathbb{Z} [\boldsymbol{x}^{\pm 1}]$,
\begin{equation}
  F (\boldsymbol{x})^{p^r} \equiv F (\boldsymbol{x}^p)^{p^{r - 1}} \pmod{p^r} . \label{eq:power:pr}
\end{equation}
Congruence \eqref{eq:power:pr} is well-known (see, for instance,
\cite[Proposition~1.9]{ry-diag13}). Second, as Rowland and Zeilberger
\cite{rz-cong} indicate, the degree (and low-degree) of the polynomials
$Q_i$ can be bounded, so that there are only finitely many possible states. We
work out explicit bounds on the $Q_i$ in Theorem~\ref{thm:scheme:bound} below.

Before doing so, we observe that essentially the same algorithm works to
compute automatic as well as scaling schemes. Indeed, to obtain an automatic
scheme, instead of checking whether \eqref{eq:automaton:step} can be written
as a linear combination of existing states, we only check whether
\eqref{eq:automaton:step} is equal to an existing state (and if it isn't, we
add \eqref{eq:automaton:step} as a new state). Likewise, to obtain a scaling
scheme, we check whether \eqref{eq:automaton:step} is equal to a multiple of
an existing state. In either case, the algorithm is guaranteed to terminate
for the same reason: namely, that there are only finitely many possible
states.

\subsection{Bounding the number of states}\label{sec:bound}

Let $\operatorname{dg} : R [\boldsymbol{x}^{\pm 1}] \rightarrow \mathbb{Z}_{\geq
0}$ denote any integer-valued degree-like function on Laurent polynomials, by
which we mean that, for any $P, Q \in R [\boldsymbol{x}^{\pm 1}]$,
\begin{equation*}
  \operatorname{dg} (P Q) \leq \operatorname{dg} (P) + \operatorname{dg} (Q), \quad \operatorname{dg}
   (\Lambda_p [Q]) \leq \frac{\operatorname{dg} (Q)}{p} .
\end{equation*}
For instance, $\operatorname{dg} (Q)$ could be the total degree of $Q$, or $\operatorname{dg}
(Q)$ could be the degree (or low-degree) with respect to any particular
variable. We note that bounds similar to those in the next result are derived
by Rowland and Yassawi \cite{ry-diag13} in the case of constructing
automatic $p$-schemes for diagonals of rational functions.

\begin{theorem}
  \label{thm:scheme:bound}The above construction of a $p$-scheme (whether
  automatic, scaling or linear) for $A_0 (n) = \operatorname{ct} [P (\boldsymbol{x})^n Q
  (\boldsymbol{x})]$ modulo $p^r$ results in the states $A_i (n) = \operatorname{ct}
  [P_i (\boldsymbol{x})^n Q_i (\boldsymbol{x})]$ with at most $r$ choices for $P_i
  (\boldsymbol{x})$. Moreover, we have
  \begin{equation*}
    \operatorname{dg} (Q_i) \leq p^{r - 1} a - 1 + \max (0, b - a + 1),
  \end{equation*}
  where $a = \operatorname{dg} (P)$ and $b = \operatorname{dg} (Q)$.
\end{theorem}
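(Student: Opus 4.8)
The plan is to track two invariants through the algorithm described in Section~\ref{sec:schemes:alg}: first, the finitely many possibilities for the polynomials $P_i$, and second, a uniform upper bound on $\operatorname{dg}(Q_i)$. These two facts together bound the number of possible states, and they are proved essentially independently of which variant (automatic, scaling, or linear) is used, since all three variants produce the same underlying pairs $(P_i, Q_i)$ and differ only in how states are identified.

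For the claim about $P_i$, I would argue by induction on the construction. Starting from $P_0 = P$, each step either leaves $P_i$ unchanged or passes from $\tilde{P} = P_i^p$ to some $\hat P$ with $\hat P(\boldsymbol x^p) = P_i^p$, i.e.\ $\hat P = \Lambda_p[P_i^p] $ in the favorable case. By construction the $P_i$ are essentially powers $P^{p^s}$, so it suffices to observe that congruence \eqref{eq:power:pr}, namely $F(\boldsymbol x)^{p^r} \equiv F(\boldsymbol x^p)^{p^{r-1}} \pmod{p^r}$, forces the tower of powers to stabilize modulo $p^r$ after at most $r$ distinct values. Concretely, $P^{p^r} \equiv (P(\boldsymbol x^p))^{p^{r-1}} \pmod{p^r}$ means that once we would reach the $r$-th power in the ``un-substituted'' form, it is already a $p$-th power of an earlier polynomial, so no genuinely new $P_i$ arises; hence there are at most $r$ choices $P_0, P_1, \ldots, P_{r-1}$ for the base, corresponding to $s = 0, 1, \ldots, r-1$.

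The degree bound is the substantive part, and I would prove it by bounding $\operatorname{dg}(Q_i)$ at each stage of the recursion using the two defining properties of $\operatorname{dg}$. In the step producing $\hat Q$, we have either $\hat Q = \tilde Q = P_i^k Q_i$ with $0 \le k \le p-1$, or $\hat Q = \Lambda_p[P_i^k Q_i]$. In the first (no-substitution) case, $\operatorname{dg}(\hat Q) \le k\,\operatorname{dg}(P_i) + \operatorname{dg}(Q_i)$; in the second, the extra factor of $1/p$ from $\operatorname{dg}(\Lambda_p[\cdot]) \le \operatorname{dg}(\cdot)/p$ is what keeps the degrees from growing without bound. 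Writing $a = \operatorname{dg}(P)$ and noting $\operatorname{dg}(P_i) \le p^{s} a$ for the power $P^{p^s}$ with $s \le r-1$, I would set up the worst-case recursion for $\operatorname{dg}(Q_i)$ and solve it, the claimed closed form $p^{r-1}a - 1 + \max(0, b-a+1)$ emerging as the fixed point of the map $q \mapsto (p-1)\,p^{r-1}a + q$ followed by division by $p$, started from $b = \operatorname{dg}(Q)$.

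I expect the main obstacle to be getting the constants in the degree recursion exactly right, in particular correctly accounting for the $k$ ranging up to $p-1$ and for the largest admissible power $P_i = P^{p^{r-1}}$ contributing $\operatorname{dg}(P_i) \le p^{r-1}a$. The delicate point is that the bound must be an invariant of the recursion: one must check that if $\operatorname{dg}(Q_i)$ satisfies the claimed inequality then so does $\operatorname{dg}(\hat Q)$ in both the substituted and un-substituted cases, so that the bound propagates to every state produced by the algorithm. The $\max(0, b-a+1)$ term is precisely the correction ensuring the inequality holds already for the initial state $Q_0 = Q$ (where it must accommodate $b$ possibly exceeding $p^{r-1}a - 1$), so I would verify the base case and the inductive step separately, with care taken that applying $\Lambda_p$ can only help rather than hurt the bound.
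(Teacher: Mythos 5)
Your proposal is correct in substance and rests on the same two ingredients as the paper's proof: the congruence \eqref{eq:power:pr} to cap the number of distinct $P_i$ at $r$, and the two defining properties of $\operatorname{dg}$ (plus integrality, which produces the $-1$). Where you differ is in organization. The paper avoids a step-by-step induction entirely by observing that every state with $i \geq 1$ equals $A_0(p^s n + k)$ for some $s \geq 1$ and $0 \leq k \leq p^s - 1$, and then bounding in two cases: for $s < r$ directly, $\operatorname{dg}(Q_i) \leq k a + b \leq (p^{r-1}-1)a + b$; for $s \geq r$, a single application of the iterated Cartier operator gives $Q_i = \Lambda_p^{s-r+1}[P(\boldsymbol{x})^k Q(\boldsymbol{x})]$, whence $\operatorname{dg}(Q_i) \leq \bigl((p^s - 1)a + b\bigr)/p^{s-r+1} = p^{r-1}a + (b-a)/p^{s-r+1}$, with integrality yielding $p^{r-1}a - 1$ when $b < a$. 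Your trajectory-wise induction reaches the same bound; the paper's closed-form bookkeeping is what lets it dispense with the invariance verification you flag as delicate.

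One step of your plan does need sharpening before it closes. The claimed bound is \emph{not} literally preserved by a single un-substituted step: knowing only $\operatorname{dg}(Q_i) \leq p^{r-1}a - 1 + \max(0, b-a+1)$ and applying $Q_i \mapsto P_i^k Q_i$ with $P_i = P^{p^s}$, $s \leq r-2$, can overshoot the bound. What saves the induction is that a state carrying $P^{p^s}$ can only have been reached by $s$ growth steps from $A_0$, so it satisfies the stronger, $s$-indexed invariant $\operatorname{dg}(Q_i) \leq b + (p^s - 1)a$; this coupling of accumulated digits to the current power is exactly what the paper's formula $A_0(p^s n + k)$ with $k < p^s$ encodes automatically, and your ``worst-case recursion'' must carry it explicitly. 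Relatedly, your fixed-point description is slightly off: in the absorbing regime $P_i = P^{p^{r-1}}$ the map $q \mapsto \bigl(q + (p-1)p^{r-1}a\bigr)/p$ contracts toward the fixed point $p^{r-1}a$, so for $b \geq a$ the stated bound is realized by the \emph{entry value} $p^{r-1}a + (b-a)$ at the end of the growing phase, not by the fixed point itself. Finally, the three scheme variants do not produce identical collections of pairs $(P_i, Q_i)$ (linear schemes stop generating states earlier than automatic ones), but this is harmless for your argument, since every state in any variant arises by iterating the same update from $(P, Q)$; and your remark that an ``accidental'' substitution only helps the bound is correct and covers situations like Example~\ref{eg:bound:catalan:2r}.
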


\begin{proof}
  As mentioned earlier, it is a consequence of congruence \eqref{eq:power:pr}
  that there are at most $r$ different polynomials $P_i$. By construction,
  each state $A_i (n) = \operatorname{ct} [P_i (\boldsymbol{x})^n Q_i (\boldsymbol{x})]$
  with $i \geq 1$ is obtained as
  \begin{equation*}
    A_i (n) = A_0 (p^s n + k)
  \end{equation*}
  for some $s \geq 1$ and some $k \in \{ 0, 1, \ldots, p^s - 1 \}$. If $s
  < r$, then
  \begin{equation*}
    A_0 (p^s n + k) = \operatorname{ct} [P (\boldsymbol{x})^{p^s n} P (\boldsymbol{x})^k
     Q (\boldsymbol{x})],
  \end{equation*}
  in which case
  \begin{equation}
    \operatorname{dg} (Q_i) \leq \operatorname{dg} (P (\boldsymbol{x})^k Q (\boldsymbol{x}))
    \leq k a + b \leq (p^{r - 1} - 1) a + b = p^{r - 1} a + (b - a)
    . \label{eq:dgQ:1}
  \end{equation}
  On the other hand, if $s \geq r$, then it follows from
  \eqref{eq:power:pr} that
  \begin{equation*}
    P (\boldsymbol{x})^{p^s} \equiv P (\boldsymbol{x}^{p^{s - r + 1}})^{p^{r -
     1}} \pmod{p^r}
  \end{equation*}
  and, hence,
  \begin{align*}
    A_0 (p^s n + k) & = \operatorname{ct} [P (\boldsymbol{x})^{p^s n} P
    (\boldsymbol{x})^k Q (\boldsymbol{x})]\\
    & \equiv \operatorname{ct} [P (\boldsymbol{x}^{p^{s - r + 1}})^{p^{r - 1} n} P
    (\boldsymbol{x})^k Q (\boldsymbol{x})] \pmod{p^r}\\
    & = \operatorname{ct} [P (\boldsymbol{x})^{p^{r - 1} n} \Lambda_p^{s - r + 1} [P
    (\boldsymbol{x})^k Q (\boldsymbol{x})]] .
  \end{align*}
  In particular, in this case,
  \begin{align}
    \operatorname{dg} (Q_i) & \leq \operatorname{dg} (\Lambda_p^{s - r + 1} [P
    (\boldsymbol{x})^k Q (\boldsymbol{x})])  \label{eq:dgQ:2}\\
    & \leq \frac{k a + b}{p^{s - r + 1}} \leq \frac{(p^s - 1) a +
    b}{p^{s - r + 1}} = p^{r - 1} a + \frac{b - a}{p^{s - r + 1}} \leq
    \left\{\begin{array}{ll}
      p^{r - 1} a + \frac{b - a}{p}, & \text{if $b \geq a$},\\
      p^{r - 1} a - 1, & \text{if $b < a$} .
    \end{array}\right. \nonumber
  \end{align}
  Combining \eqref{eq:dgQ:1} and \eqref{eq:dgQ:2}, we obtain the claimed
  bound.
\end{proof}

We note that in the special case $r = 1$, where we are working modulo a prime
$p$, the degree bounds are independent of $p$.

\begin{corollary}
  The above construction of a $p$-scheme for $\operatorname{ct} [P (\boldsymbol{x})^n Q
  (\boldsymbol{x})]$ modulo $p$ results in states $A_i (n) = \operatorname{ct} [P
  (\boldsymbol{x})^n Q_i (\boldsymbol{x})]$ with
  \begin{equation*}
    \operatorname{dg} (Q_i) \leq \max (\operatorname{dg} (P) - 1, \operatorname{dg} (Q)) .
  \end{equation*}
\end{corollary}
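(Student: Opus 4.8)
The plan is to derive this corollary directly from Theorem~\ref{thm:scheme:bound} by specializing to $r = 1$, so that no new constructive work is required. First I would observe that setting $r = 1$ forces $p^{r - 1} = 1$; correspondingly, the theorem guarantees at most $r = 1$ distinct polynomial $P_i$, which must therefore coincide with $P$ itself. This already accounts for the shape $A_i (n) = \operatorname{ct} [P (\boldsymbol{x})^n Q_i (\boldsymbol{x})]$ asserted in the statement, with the same underlying base polynomial $P$ across all states, rather than the more general $P_i$ permitted for $r > 1$.

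Next I would substitute $r = 1$ into the degree bound $\operatorname{dg} (Q_i) \leq p^{r - 1} a - 1 + \max (0, b - a + 1)$ furnished by Theorem~\ref{thm:scheme:bound}, where $a = \operatorname{dg} (P)$ and $b = \operatorname{dg} (Q)$. Since $p^{r - 1} = 1$, this immediately yields $\operatorname{dg} (Q_i) \leq a - 1 + \max (0, b - a + 1)$, and it remains only to simplify the right-hand side into the claimed closed form.

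The final step is the elementary identity $a - 1 + \max (0, b - a + 1) = \max (a - 1, b)$, which one verifies by rewriting $\max (0, b - a + 1) = \max (a - 1, b) - (a - 1)$, or equivalently by splitting into the two cases $b \geq a$ (where both sides equal $b$) and $b < a$ (where both sides equal $a - 1$), exactly mirroring the case distinction already made in the proof of Theorem~\ref{thm:scheme:bound}. Substituting $a - 1 = \operatorname{dg} (P) - 1$ and $b = \operatorname{dg} (Q)$ then gives precisely $\operatorname{dg} (Q_i) \leq \max (\operatorname{dg} (P) - 1, \operatorname{dg} (Q))$. I do not anticipate any genuine obstacle: the only content beyond invoking the theorem is this one-line arithmetic simplification, so the corollary is immediate once the $r = 1$ instance of the general bound is in hand.
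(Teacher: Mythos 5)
Your proposal is correct and follows exactly the paper's route: the paper also proves this corollary simply as the special case $r = 1$ of Theorem~\ref{thm:scheme:bound}, noting that $P_i(\boldsymbol{x}) = P(\boldsymbol{x})$ since $P(\boldsymbol{x})^{pn} \equiv P(\boldsymbol{x}^p)^n \pmod{p}$. Your only addition is spelling out the elementary simplification $a - 1 + \max(0, b - a + 1) = \max(a - 1, b)$, which the paper leaves implicit.
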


\begin{proof}
  This is the special case $r = 1$ of Theorem~\ref{thm:scheme:bound}. (Note
  that in this case $P_i (\boldsymbol{x}) = P (\boldsymbol{x})$ because $P
  (\boldsymbol{x})^{p n} \equiv P (\boldsymbol{x}^p)^n$ modulo $p$.)
\end{proof}

\begin{example}
  Suppose that we want to compute an automatic $2$-scheme for the Motzkin
  numbers modulo $2$. By \eqref{eq:motzkin:ct}, $M (n) = \operatorname{ct} [P (x)^n Q
  (x)] $ for $P (x) = x^{- 1} + 1 + x$ and $Q (x) = 1 - x^2$. Hence, choosing
  $\operatorname{dg}$ to be the usual degree in Theorem~\ref{thm:scheme:bound}, we
  have $a = 1$ and $b = 2$. On the other hand, choosing $\operatorname{dg}$ to be the
  low-degree, we have $a = 1$ and $b = 0$. We thus obtain the bounds
  \begin{equation*}
    \text{deg} (Q_i) \leq 2, \quad \text{low-deg} (Q_i) \leq 0.
  \end{equation*}
  Therefore, all states are of the form $A_i (n) = \operatorname{ct} [P (x)^n Q_i
  (x)]$ with $Q_i = \alpha_i + \beta_i x + \gamma_i x^2$ for some $\alpha_i,
  \beta_i, \gamma_i \in \{ 0, 1 \}$. In particular, we know \textit{a
  priori} that the desired $2$-scheme can have at most $2^3 = 8$ states. In
  fact, as made explicit in \cite{rz-cong}, there exists such a scheme with
  $4$ states. (In \cite{rz-cong}, the computation is performed using $30$ as
  an upper bound for the maximum number of acceptable states. The general
  bounds discussed here show that we can confidently proceed without imposing
  an upper bound during the construction of the congruence scheme.)
\end{example}

\begin{example}
  \label{eg:bound:motzkin:pr}Likewise, for computing an automatic $p$-scheme
  for the Motzkin numbers modulo $p^r$, we find that
  \begin{equation}
    \text{deg} (Q_i) \leq p^{r - 1} + 1, \quad \text{low-deg} (Q_i)
    \leq p^{r - 1} - 1. \label{eq:bound:motzkin:pr:deg}
  \end{equation}
  In fact, as described in more detail in Example~\ref{eg:motzkin:symmetry}
  below, the symmetry between $x$ and $x^{- 1}$ in $P (x) = x^{- 1} + 1 + x$
  makes it possible to replace $x^{- 1}$ by $x$ in $Q$ and to thus choose the
  $Q_i$ such that $\text{low-deg} (Q_i) = 0$. Hence, all states can be
  expressed as $A_i (n) = \operatorname{ct} [P_i (x)^n Q_i (x)]$ with at most $r$
  possibilities for $P_i$ as well as $Q_i$ with degree at most $p^{r - 1} + 1$
  and low-degree $0$. This implies that there is an automatic $p$-scheme with
  at most $r \cdot (p^r)^{p^{r - 1} + 2} = r \cdot p^{r (p^{r - 1} + 2)}$ many
  states.
  
  We can slightly improve this bound by observing that ``most'' states involve
  the $P_i$ of highest degree. Indeed, it follows from \eqref{eq:power:pr}
  that, in the absence of further simplification, each polynomial $P_i$ is one
  of $P (x)^{p^j}$ for $j \in \{ 0, 1, \ldots, r - 1 \}$. In that case, the
  states involving $P (x)^{p^j}$ for $j < r - 1$ correspond to $A_0 (p^j n +
  k)$ for some $k \in \{ 0, 1, \ldots, p^j - 1 \}$, while all other states
  involve $P (x)^{p^{r - 1}}$. In particular, there are at most $p^j$ many
  states involving $P (x)^{p^j}$ for $j < r - 1$. Therefore, there is an
  automatic $p$-scheme for the Motzkin numbers modulo $p^r$ with at most
  \begin{equation}
    1 + p + p^2 + \cdots + p^{r - 2} + p^{r (p^{r - 1} + 2)} = \frac{p^{r - 1}
    - 1}{p - 1} + p^{r (p^{r - 1} + 2)} \label{eq:bound:motzkin:pr:auto}
  \end{equation}
  many states, improving the earlier bound of $r \cdot p^{r (p^{r - 1} + 2)}$.
  
  For instance, for the Motzkin numbers modulo $4$ this means there is an
  automatic scheme with at most $1 + 2^8 = 257$ states. However, there exists
  such a scheme with only $14$ states. In general, while the bounds for
  $\text{deg} (Q_i)$ appear to be sharp, the resulting doubly-exponential
  bounds on the total number of states are far from effective. For the minimal
  numbers of states for small $r$, we refer to
  Example~\ref{eg:motzkin:2r:sage}.
  
  In the case of linear schemes, the above considerations imply that there is
  a linear $p$-scheme for the Motzkin numbers modulo $p^r$ with at most
  \begin{equation}
    (1 + p + p^2 + \cdots + p^{r - 2}) + (p^{r - 1} + 2) = \frac{p^r - 1}{p -
    1} + 2 \label{eq:bound:motzkin:pr:linear}
  \end{equation}
  many states. In particular, there is a linear $2$-scheme for the Motzkin
  numbers modulo $2^r$ with at most $2^r + 1$ many states. We note that the
  bounds \eqref{eq:bound:motzkin:pr:linear} confirm weaker bounds conjectured
  by Henningsen \cite{henningsen-msc} for $p = 2$ and $p = 3$.
  
  In the same spirit, for scaling schemes, one can derive bounds for the
  maximum number of required states which are lower than
  \eqref{eq:bound:motzkin:pr:auto} (the improved bounds are a bit worse than
  \eqref{eq:bound:motzkin:pr:auto} divided by $p^r$) but significantly higher
  than \eqref{eq:bound:motzkin:pr:linear}. As in the case of automatic
  schemes, these bounds appear to not be effective. It would be of
  considerable interest to obtain sharper bounds for automatic and scaling
  schemes, even if restricted to certain families of constant terms.
\end{example}

\begin{example}
  \label{eg:motzkin:symmetry}Recall from \eqref{eq:motzkin:ct} that the
  Motzkin numbers have the constant term representation $M (n) = \operatorname{ct}
  [(x^{- 1} + 1 + x)^n (1 - x^2)]$. When computing a $2$-scheme for $M (n)$
  modulo $4$, we obtain, for instance,
  \begin{align*}
    M (2 n + 1) & = \operatorname{ct} [(x^{- 1} + 1 + x)^{2 n} (x^{- 1} + 1 + x) (1 - x^2)]\\
    & = \operatorname{ct} [(x^{- 1} + 1 + x)^{2 n} (x^{- 1} + 1 - x^2 - x^3)] .
  \end{align*}
  Note that $x^{- 1} + 1 - x^2 - x^3$ has degree $3$ and low-degree $1$
  consistent with \eqref{eq:bound:motzkin:pr:deg} for $p = 2$, $r = 2$. On the
  other hand, the symmetry between $x$ and $x^{- 1}$ in $P (x) = x^{- 1} + 1 +
  x$ implies that $\operatorname{ct} [(x^{- 1} + 1 + x)^{2 n} x^{- 1}] = \operatorname{ct}
  [(x^{- 1} + 1 + x)^{2 n} x]$ so that
  \begin{equation*}
    M (2 n + 1) = \operatorname{ct} [(x^{- 1} + 1 + x)^{2 n} (1 + x - x^2 - x^3)] .
  \end{equation*}
  This observation allows us to write all states in the form $\operatorname{ct} [P_i
  (x)^n Q_i (x)]$ with $\text{low-deg} (Q_i) = 0$. Note that this observation
  similarly applies to the computation of $p$-schemes for any constant term $A
  (n) = \operatorname{ct} [P (\boldsymbol{x})^n Q (\boldsymbol{x})]$ modulo any $p^r$
  provided that there are symmetries in $P (\boldsymbol{x})$ among the variables
  $\boldsymbol{x}^{\pm 1}$. Basic such symmetries are automatically taken into
  account by the implementation discussed in Section~\ref{sec:cas}.
\end{example}

\begin{example}
  \label{eg:bound:catalan:2r}Recall from \eqref{eq:catalan:ct} that the
  Catalan numbers have the constant term representation $C (n) = \operatorname{ct} [P
  (x)^n Q (x)]$ with $P (x) = x^{- 1} + 2 + x$ and $Q (x) = 1 - x$. Proceeding
  as in Example~\ref{eg:bound:motzkin:pr}, we find the corresponding bounds
  $\deg (Q_i) \leq p^{r - 1}$ and $\text{low-deg} (Q_i) = 0$.
  Consequently, there is a linear $p$-scheme for the Catalan numbers modulo
  $p^r$ with at most
  \begin{equation}
    (1 + p + p^2 + \cdots + p^{r - 2}) + (p^{r - 1} + 1) = \frac{p^r - 1}{p -
    1} + 1 \label{eq:bound:catalan:pr:linear}
  \end{equation}
  many states. In particular, in the case $r = 1$, we conclude that there is a
  linear $p$-scheme for the Catalan numbers modulo $p$ with $2$ states. These
  schemes are made explicit in \cite{hs-lucas-x} where they are interpreted
  as generalized Lucas congruences.
  
  For $p = 2$ and $r > 1$, the bound \eqref{eq:bound:catalan:pr:linear} can be
  improved by the observation $P (x) = (x^{- 1 / 2} + x^{1 / 2})^2$ which
  implies that
  \begin{equation*}
    P (x)^{2^{r - 1}} \equiv P (x^2)^{2^{r - 2}} \pmod{2^r},
  \end{equation*}
  while, by \eqref{eq:power:pr}, this congruence only holds modulo $2^{r - 1}$
  for general $P (x)$. Using this congruence in place of \eqref{eq:power:pr}
  in the proof of Theorem~\ref{thm:scheme:bound}, we find that it is possible
  to express every state $\operatorname{ct} [P_i (x)^n Q_i (x)]$ so that each $P_i$ is
  one of $P (x)^{2^j}$ for $j \in \{ 0, 1, \ldots, r - 2 \}$ and so that the
  stronger bound $\deg (Q_i) \leq 2^{r - 2}$ holds. Accordingly, if $r >
  1$, there is a linear $2$-scheme for the Catalan numbers modulo $2^r$ with
  at most
  \begin{equation}
    (1 + 2 + 2^2 + \cdots + 2^{r - 3}) + (2^{r - 2} + 1) = 2^{r - 1}
    \label{eq:bound:catalan:2r:linear}
  \end{equation}
  many states. The bounds \eqref{eq:bound:catalan:pr:linear} and
  \eqref{eq:bound:catalan:2r:linear} confirm weaker bounds conjectured by
  Henningsen \cite{henningsen-msc}.
\end{example}

It would be of interest to determine whether the bounds
\eqref{eq:bound:motzkin:pr:linear} as well as
\eqref{eq:bound:catalan:pr:linear} and \eqref{eq:bound:catalan:2r:linear} for
the number of states of linear $p$-schemes can be further improved.

\section{Schemes for \texorpdfstring{$p$}{p}-adic valuations and
applications}\label{sec:valuations}

\subsection{Computing schemes for \texorpdfstring{$p$}{p}-adic
valuations}\label{sec:valuations:intro}

As usual, the $p$-adic valuation of a nonzero integer $c$, denoted by $\nu_p
(c)$, is the largest $r$ such that $p^r$ divides $c$. Suppose that a sequence
$A (n)$ is such that its values modulo $p^r$ are $p$-automatic (which includes
any sequence that can be represented using constant terms). Rowland and
Yassawi \cite{ry-diag13} observe that, if $A (n)$ is not divisible by
arbitrarily large powers of $p$, then the sequence of $p$-adic valuations of
$A (n)$ is $p$-automatic as well. Indeed, if $\nu_p (A (n)) \leq r$ for
all $n$, then an automatic $p$-scheme for $\nu_p (A (n))$ can be easily
obtained from an automatic $p$-scheme for $A (n)$ modulo $p^r$.

Moreover, it is not hard to see that, along the same lines, a (scaling)
$p$-scheme for $\nu_p (A (n))$ can be obtained from a scaling $p$-scheme for
$A (n)$ modulo $p^r$. Namely, suppose we have a scaling $p$-scheme for a
sequence $A (n)$ modulo $p^r$. Let $A_i$ be the states of this scheme. By
construction, each transition is of the form
\begin{equation*}
  A_i (p n + k) \equiv \alpha_i^{(k)} A_{\sigma (i, k)} (n) \pmod{p^r} .
\end{equation*}
Replacing each transition factor $\alpha = \alpha_i^{(k)}$ with $p^{\nu_p
(\alpha)}$, and likewise replacing each initial condition, we obtain a
$p$-scheme that computes $p^{\nu_p (A (n))}$ modulo $p^r$. If $\nu_p (A (n))
\leq r$, the values $p^{\nu_p (A (n))}$ modulo $p^r$ are in one-to-one
correspondence with the values of $\nu_p (A (n))$, so that this scaling
$p$-scheme characterizes the $p$-adic valuation of $A (n)$.

If we make the reasonable assumption that, in the algorithm described in
Section~\ref{sec:schemes:alg}, the cost of checking whether
\eqref{eq:automaton:step} is an existing state is essentially equal to the
cost of checking whether \eqref{eq:automaton:step} is a multiple of an
existing state, then computing a scaling scheme is at least as fast as
computing an automatic scheme. On the other hand, in many practical examples,
such as the one described in Section~\ref{sec:motzkin:p2}, computing a scaling
scheme is considerably faster and results in schemes with significantly
reduced numbers of states. This makes scaling schemes particularly well suited
for the purpose of computing schemes that describe $p$-adic valuations.

\subsection{Reproving a conjecture on Motzkin numbers modulo
\texorpdfstring{$8$}{8}}\label{sec:motzkin:v2}

As an examplary application, we reprove the following result that was
conjectured by Amdeberhan, Deutsch and Sagan \cite[Conjecture~5.5]{ds-cong}
and (much more laboriously) proven by Eu, Liu and Yeh \cite{ely-mod8}.

\begin{theorem}
  \label{thm:motzkin:v2}The $2$-adic valuation of the Motzkin numbers $M (n)$
  is
  \begin{equation*}
    \nu_2 (M (n)) = \left\{\begin{array}{ll}
       2, & \text{if $n = (4 i + 1) 4^{j + 1} - 1$ or $n = (4 i + 3) 4^{j + 1}
       - 2$ with $i, j \in \mathbb{Z}_{\geq 0}$,}\\
       1, & \text{if $n = (4 i + 1) 4^{j + 1} - 2$ or $n = (4 i + 3) 4^{j + 3}
       - 1$ with $i, j \in \mathbb{Z}_{\geq 0}$,}\\
       0, & \text{otherwise.}
     \end{array}\right.
  \end{equation*}
\end{theorem}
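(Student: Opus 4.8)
The plan is to obtain $\nu_2(M(n))$ as the output of an explicit finite automaton and then match that automaton against the claimed closed form. Beginning with the constant term representation $M(n)=\operatorname{ct}[(x^{-1}+1+x)^n(1-x^2)]$ from \eqref{eq:motzkin:ct}, I would run the scaling variant of the algorithm of Section~\ref{sec:schemes:alg} to build a scaling $2$-scheme for $M(n)$ modulo $8$. By Theorem~\ref{thm:scheme:bound}, refined for the Motzkin numbers as in Example~\ref{eg:bound:motzkin:pr}, the states $A_i(n)=\operatorname{ct}[P_i(x)^nQ_i(x)]$ range over a finite, explicitly bounded set, so the algorithm terminates and returns a concrete list of states together with transition factors $\alpha_i^{(k)}\in\mathbb{Z}/8\mathbb{Z}$ and initial conditions.

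Next, applying the construction of Section~\ref{sec:valuations:intro}, I would pass from this scheme to one for the valuation by replacing every transition factor and every initial condition $\alpha$ with $2^{\nu_2(\alpha)}$ modulo $8$; this yields a scaling $2$-scheme computing $2^{\nu_2(M(n))}$ modulo $8$. Working modulo $8$ rather than modulo $4$ is essential here: a Motzkin number with $\nu_2\ge 3$ would register as the residue $0$, whereas the values $2^0,2^1,2^2\equiv 1,2,4$ are distinct and nonzero. Hence if the output $0$ never occurs in the extracted scheme, we may conclude both that $\nu_2(M(n))\le 2$ for all $n$ (equivalently, that $8$ never divides a Motzkin number) and, since $2<3$, that the scheme recovers $\nu_2(M(n))$ exactly. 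Verifying that $0$ is indeed never produced is a finite inspection of the completed scheme.

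What remains --- and the step I expect to be the crux --- is to translate the resulting valuation automaton, which reads the base-$2$ digits of $n$ and emits a value in $\{0,1,2\}$, into the piecewise description of the theorem, whose cases are phrased through base-$4$ blocks such as $n=(4i+1)4^{j+1}-1$. Expanding these expressions shows that they correspond to prescribed patterns among the trailing base-$2$ digits of $n$ (for instance $(4i+1)4^{j+1}-1$ has binary form given by the digits of $4i$ followed by an even run of $2j+2$ ones), exactly the kind of data an automaton peeling off least significant digits detects. Rather than unfold the automaton informally, I would verify the formula by a finite consistency check: writing $S_0,S_1,S_2$ for the subsets of $\mathbb{Z}_{\ge 0}$ defined by the three cases, confirm that this partition transforms under the digit maps $n\mapsto 2n$ and $n\mapsto 2n+1$ precisely as the scheme's transitions dictate once states are matched up, and that it agrees with the directly computed values on the finitely many small $n$ serving as base cases. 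Because each case constrains only finitely many low-order base-$4$ digits of $n$ beyond the free parameters $i,j$, this invariance is a finite verification, and it pins down $\nu_2(M(n))$ for every $n$.
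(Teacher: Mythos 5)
Your proposal is correct and follows essentially the same route as the paper: the constant-term representation \eqref{eq:motzkin:ct}, a $2$-scheme modulo $8$, the valuation scheme obtained by replacing each transition coefficient and initial condition $\alpha$ with $2^{\nu_2(\alpha)}$ (with the absence of the output $0$ certifying $\nu_2(M(n))\le 2$, exactly the point Rowland--Yassawi used to get $2$-automaticity), and then a finite analysis of the resulting automaton (the paper's has $10$ states). The only departure is in the last, routine step: the paper reads off the accepting paths directly (for example, the digit string $01^{2j+1}01$ followed by the digits of $i$, read least-significant-first, gives $n=(4i+3)4^{j+1}-2$), whereas you verify the claimed formula by invariance under $n\mapsto 2n$ and $n\mapsto 2n+1$ --- a sound induction, provided you carry it out with one set per automaton state rather than just the three value classes $S_0,S_1,S_2$ (distinct states share the labels $0$, $1$, $2$ and have different transitions), which your ``once states are matched up'' proviso correctly anticipates.
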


\begin{proof}
  We begin by following the approach of Rowland and Yassawi \cite{ry-diag13}
  who computed a finite state automaton representing Motzkin numbers modulo
  $8$ and used it to conclude that no Motzkin number is $0$ modulo $8$. In
  particular, this implies $\nu_2 (M (n)) < 3$ so that, by the argument given
  in Section~\ref{sec:valuations:intro}, Rowland and Yassawi were able to
  conclude that the sequence of $2$-adic valuations of Motzkin numbers is
  $2$-automatic. Indeed, they provided a corresponding finite state automaton
  with $17$ states in \cite[Figure~5]{ry-diag13}.
  Theorem~\ref{thm:motzkin:v2} could be derived by a careful analysis of this
  automaton.
  
  However, we can slightly simplify this automaton (as well as the ensuing
  analysis) as follows. Starting with the constant term
  representation~\eqref{eq:motzkin:ct} of the Motzkin numbers, we use our
  implementation to compute the simplified finite state automaton with $10$
  states for $\nu_2 (M (n))$ depicted in Figure~\ref{fig:motzkin:v2} (see
  Example~\ref{eg:motzkin:v2:sage} for the details on this automatic
  computation).
  
  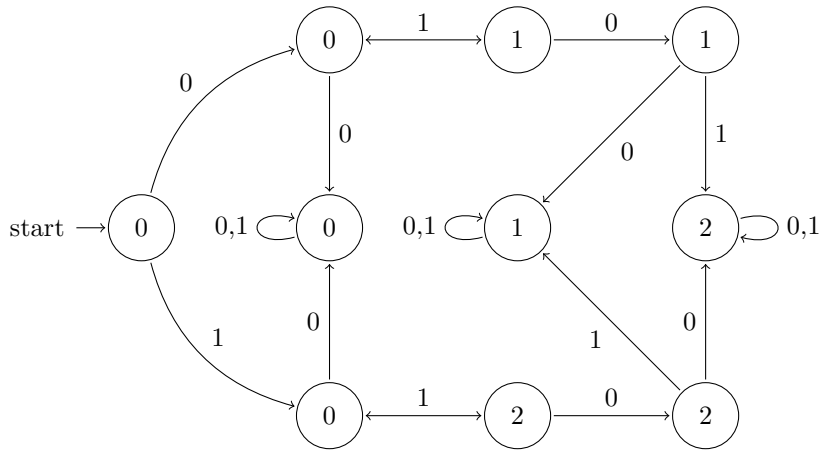
\begin{figure}[h]
    \begin{center}
     \begin{tikzpicture}[shorten >=1pt,shorten <=1pt,node distance=2.5cm,on grid,auto] 
       \node[state,initial] (q_0) {$0$};
       \node[state] (q_3) [right=of q_0] {$0$};
       \node[state] (q_8) [right=of q_3] {$1$};
       \node[state] (q_9) [right=of q_8] {$2$};
       \node[state] (q_1) [above=of q_3] {$0$};
       \node[state] (q_2) [below=of q_3] {$0$};
       \node[state] (q_4) [above=of q_8] {$1$};
       \node[state] (q_5) [below=of q_8] {$2$};
       \node[state] (q_6) [above=of q_9] {$1$};
       \node[state] (q_7) [below=of q_9] {$2$};
       \path[->] 
        (q_0) edge[bend left] node {0} (q_1)
        (q_0) edge[bend right] node {1} (q_2)
        (q_1) edge node {0} (q_3)
        (q_2) edge node {0} (q_3)
        (q_4) edge node {0} (q_6)
        (q_5) edge node {0} (q_7)
        (q_6) edge node {0} (q_8)
        (q_7) edge node {1} (q_8)
        (q_6) edge node {1} (q_9)
        (q_7) edge node {0} (q_9)
        (q_3) edge [loop left] node {0,1} ()
        (q_8) edge [loop left] node {0,1} ()
        (q_9) edge [loop right] node {0,1} ();
        \path[<->] 
        (q_1) edge node {1} (q_4)
        (q_2) edge node {1} (q_5);
    \end{tikzpicture} 
    \end{center}
    \caption{\label{fig:motzkin:v2}Congruence automaton for $2$-adic
    valuations of the Motzkin numbers}
  \end{figure}
  
  We claim that the automaton in Figure~\ref{fig:motzkin:v2} contains the same
  information as the formula in Theorem~\ref{thm:motzkin:v2}. In the sequel,
  we will give the details for the case $\nu_2 (M (n)) = 2$, and omit those
  for $\nu_2 (M (n)) = 1$ because the argument is the same.
  
  The $n$ with $\nu_2 (M (n)) = 2$ are those with a binary expansion that when
  fed into the automaton in Figure~\ref{fig:motzkin:v2} ends up in a state
  with label $2$. Inspection of the automaton reveals that one way (namely
  moving along the upper part of the automaton) of ending up in a state with
  label $2$ is to begin with $0$, then $1$, followed by $2 j$ times the digit
  $1$ where $j \geq 0$ is arbitrary, followed by $0$, then $1$, followed
  by any further sequence of digits. Suppose that the final further sequence
  of digits by itself represents the number $i$. Further suppose that the
  string of $2 j + 4$ digits preceding $i$ (namely, $01^{2 j + 1} 01$)
  represents the number $2^{2 j + 4} - 2^{2 j + 2} - 2 = 3 \cdot 4^{j + 1} -
  2$. Then the overall string of digits represents the number
  \begin{equation*}
    n = 3 \cdot 4^{j + 1} - 2 + 2^{2 j + 4} \cdot i = (4 i + 3) 4^{j + 1} -
     2,
  \end{equation*}
  matching one of the two possibilities listed in the claimed formula. In the
  same manner, the other possibility, namely $n = (4 i + 1) 4^{j + 1} - 1$,
  corresponds to moving along the bottom part of the automaton to end up in a
  state with label $2$ (and it is clear from the automaton that there is no
  further way of ending up in a state with label $2$).
\end{proof}

\subsection{A conjecture on Motzkin numbers modulo
\texorpdfstring{$p^2$}{p2}}\label{sec:motzkin:p2}

The simple observation that scaling $p$-schemes are suitable for computing a
$p$-scheme for the $p$-adic valuations of a sequence helps make computations
feasible that were previously out of reach. We illustrate this in the case of
an interesting open question posed by Rowland and Yassawi \cite{ry-diag13},
which asks whether there exist infinitely many primes $p$ such that $M (n)
\nequiv 0 \pmod{p^2}$ for all $n \in \mathbb{Z}_{\geq
0}$. By computing congruence schemes modulo $5^2$ and $13^2$, Rowland and
Yassawi showed that $p = 5$ and $p = 13$ are two primes with this property and
offered the following conjecture:

\begin{conjecture}[{\cite[Conjecture~3.10]{ry-diag13}}]
  Let $p \in \{ 31, 37, 61 \}$. For all $n \in \mathbb{Z}_{\geq 0}$, $M
  (n) \nequiv 0 \pmod{p^2}$.
\end{conjecture}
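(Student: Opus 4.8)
The plan is to settle the conjecture computationally by constructing, for each prime $p \in \{31, 37, 61\}$, a congruence scheme for the Motzkin numbers modulo $p^2$ and verifying that the value $0$ never occurs. By Theorem~\ref{thm:scheme:bound} applied to the constant term representation $M(n) = \operatorname{ct}[(x^{-1}+1+x)^n (1-x^2)]$ with $a = \operatorname{dg}(P) = 1$ and $b = \operatorname{dg}(Q) = 2$ (taking $\operatorname{dg}$ to be the ordinary degree and, separately, the low-degree), the states $A_i(n) = \operatorname{ct}[P_i(x)^n Q_i(x)]$ are constrained to satisfy $\deg(Q_i) \le p+1$ and $\operatorname{low\text{-}deg}(Q_i) \le p-1$, with at most $r = 2$ possibilities for $P_i$. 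This guarantees a priori that the algorithm of Section~\ref{sec:schemes:alg} terminates and produces a finite scheme, so the computation is rigorous rather than heuristic: we need not impose any arbitrary cap on the number of states.

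The key point is to use \emph{scaling} schemes rather than automatic ones, for the reasons emphasized in Section~\ref{sec:valuations:intro}: the relevant question is purely about $p$-adic valuations, namely whether $\nu_p(M(n)) \ge 2$ ever holds, and a scaling scheme records each transition in the form $A_i(pn+k) \equiv \alpha_i^{(k)} A_{\sigma(i,k)}(n) \pmod{p^2}$. First I would verify that $M(n) \nequiv 0 \pmod{p^3}$ (equivalently, that $\nu_p(M(n)) \le 2$ for all $n$) by checking that the scaling scheme modulo $p^3$ never reaches a state that is identically $0$; this licenses extracting valuations from the modulo-$p^2$ scheme. Then, replacing each transition factor $\alpha$ by $p^{\nu_p(\alpha)}$ and each initial condition likewise, the scheme computes $p^{\nu_p(M(n))}$ modulo $p^2$, and $M(n) \nequiv 0 \pmod{p^2}$ holds for all $n$ precisely when the value $0 \pmod{p^2}$ never arises — equivalently, every reachable state has the property that the associated valuation stays below $2$.

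Concretely, I would run the Sage implementation of Section~\ref{sec:cas} to generate the scaling scheme modulo $p^2$ for each of the three primes, read off the finite set of values attained by the sequence (which for a scaling scheme is immediate from the reachable states and their scaling factors, just as for automatic schemes), and confirm that $0$ is absent. To strengthen the result to all primes below $200$ as claimed in the introduction, I would loop this verification over every prime $p < 200$, flagging those for which $0$ does occur (so that $p^2 \mid M(n)$ for some $n$) and those for which it does not. This identifies exactly the primes with the desired property, reproving the three conjectured primes and yielding the three additional ones.

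The main obstacle is \textbf{computational feasibility} rather than mathematical subtlety. The doubly-exponential worst-case bounds on the number of states (cf.\ Example~\ref{eg:bound:motzkin:pr}) mean that a naive automatic-scheme computation modulo $p^2$ for $p$ as large as $61$ — or across all primes below $200$ — is prohibitively expensive. The crux is therefore to exploit the drastically smaller state counts of scaling schemes, together with the symmetry $P(x) = x^{-1}+1+x$ exploited in Example~\ref{eg:motzkin:symmetry} to normalize $\operatorname{low\text{-}deg}(Q_i) = 0$, so that each computation remains tractable. Once the schemes are in hand, the verification itself is a routine finite inspection; the difficulty lies entirely in making the construction efficient enough to complete for every prime in the target range.
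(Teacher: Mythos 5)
Your core strategy coincides with the paper's proof of Theorem~\ref{thm:motzkin:p2}: starting from $M(n) = \operatorname{ct}[(x^{-1}+1+x)^n(1-x^2)]$, compute a scaling $p$-scheme modulo $p^2$ (with termination guaranteed a priori by Theorem~\ref{thm:scheme:bound} and the symmetry normalization of Example~\ref{eg:motzkin:symmetry}), derive from it a scheme for $p^{\nu_p(M(n))}$ by replacing each transition factor $\alpha$ and each initial condition by $p^{\nu_p(\alpha)}$, check whether $0 \pmod{p^2}$ is ever attained, and run this for every prime $p < 200$. Your identification of feasibility as the crux, and of scaling schemes as the remedy, is exactly the paper's point.

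The one genuine flaw is your preliminary step of verifying $M(n) \nequiv 0 \pmod{p^3}$ via a scheme modulo $p^3$ in order to ``license'' the valuation extraction. This is both unnecessary and self-defeating. Unnecessary: the hypothesis $\nu_p(A(n)) \leq r$ in Section~\ref{sec:valuations:intro} is needed only to put the values $p^{\nu_p(A(n))} \bmod p^r$ in one-to-one correspondence with the exact valuations; for the divisibility question it plays no role. Along any digit string the derived scheme outputs $\prod_j p^{\nu_p(\alpha_j)} \cdot p^{\nu_p(c)}$ modulo $p^2$, and this is $0$ modulo $p^2$ precisely when $\prod_j \alpha_j \cdot c \equiv M(n) \equiv 0 \pmod{p^2}$, so ``the valuation scheme modulo $p^2$ attains $0$'' is unconditionally equivalent to ``$p^2$ divides some $M(n)$'' --- which is the equivalence the paper's proof invokes directly. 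Self-defeating: the degree bound on the $Q_i$ modulo $p^3$ grows to $p^2+1$ (versus $p+1$ modulo $p^2$), so the mod-$p^3$ precomputation would dwarf the very computation it is meant to support (the paper reports about 3 days already for the mod-$197^2$ scaling scheme), destroying the feasibility you correctly identify as the main obstacle. Moreover, your stated criterion for that check --- that the scheme ``never reaches a state that is identically $0$'' --- is not the right test even on its own terms: a value $M(n) \equiv 0$ can arise from an accumulated product of transition factors each individually nonzero, so one must inspect the attained values (accumulated scaling factors times initial conditions), not merely look for a zero state. Deleting the mod-$p^3$ step entirely leaves you with precisely the paper's argument.
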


We prove this conjecture and extend it to include three further cases.

\begin{theorem}
  \label{thm:motzkin:p2}Let $p \in \{ 5, 13, 31, 37, 61, 79, 97, 103 \}$. For
  all $n \in \mathbb{Z}_{\geq 0}$, $M (n) \nequiv 0 \pmod{p^2}$. For any other prime $p < 200$, there exists $n$ such that
  $p^2$ divides $M (n)$.
\end{theorem}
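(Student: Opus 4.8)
The plan is to prove this as an explicit finite computation for each prime $p < 200$, exploiting the observation that scaling $p$-schemes are particularly economical for detecting $p$-adic valuations. By \eqref{eq:motzkin:ct}, the Motzkin numbers are given by $M(n) = \operatorname{ct}[(x^{-1}+1+x)^n(1-x^2)]$, so for each fixed prime $p$ I would first run the algorithm of Section~\ref{sec:schemes:alg} to compute a scaling $p$-scheme for $M(n)$ modulo $p^2$. By Theorem~\ref{thm:scheme:bound} (applied with $\operatorname{dg}$ the usual degree, so $a=1$, $b=2$), every state has the form $A_i(n) = \operatorname{ct}[P_i(x)^n Q_i(x)]$ with $\deg(Q_i) \le p+1$ and at most $2$ choices for $P_i$; together with the symmetry reduction $\text{low-deg}(Q_i)=0$ from Example~\ref{eg:motzkin:symmetry}, this guarantees that the scheme has finitely many states and that the computation terminates without needing any heuristic cap on the number of states.

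Once the scaling scheme is in hand, I would convert it into a scheme for $2^{\nu_p}$ (more precisely for the valuations) exactly as described in Section~\ref{sec:valuations:intro}: each transition $A_i(pn+k) \equiv \alpha_i^{(k)} A_{\sigma(i,k)}(n) \pmod{p^2}$ is recorded by its scaling factor $\alpha_i^{(k)}$, and I track $\nu_p(\alpha_i^{(k)})$ along each path. The key point is that a Motzkin number $M(n)$ satisfies $M(n)\equiv 0 \pmod{p^2}$ if and only if the base-$p$ expansion of $n$, fed through the automaton, reaches a state whose associated value is $0$ modulo $p^2$ — equivalently, the accumulated valuation along the path is at least $2$. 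Thus the statement ``$p^2 \nmid M(n)$ for all $n$'' becomes the purely combinatorial assertion that \emph{no reachable state of the scheme carries the value $0$ modulo $p^2$}, which can be checked by inspecting the finitely many initial conditions (states) of the associated automatic scheme. For each $p \in \{5,13,31,37,61,79,97,103\}$ I would verify that $0 \pmod{p^2}$ does not occur among these, establishing the first claim. For every other prime $p < 200$, I would instead exhibit, from the same scheme, a concrete $n$ whose path lands in a state labeled $0$ modulo $p^2$, producing an explicit witness.

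The computational heavy lifting is deferred to the implementation introduced in Section~\ref{sec:cas}, and correctness of the conclusion rests on the established facts that (i) the algorithm of Section~\ref{sec:schemes:alg} terminates and yields a valid scaling scheme (guaranteed by Theorem~\ref{thm:scheme:bound}), (ii) such a scheme faithfully computes $M(n)$ modulo $p^2$ for all $n$, and (iii) for an automatic or scaling scheme produced by this construction every state is reachable from the initial state, so that the values actually attained by the sequence are precisely those appearing as state values. Point (iii) is what makes the ``no state equals $0$'' check equivalent to ``$p^2$ never divides $M(n)$'': if $0 \pmod{p^2}$ were attained it would have to appear as some reachable state, and conversely every reachable state value is attained.

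The main obstacle I expect is computational rather than conceptual: the degree bound $\deg(Q_i) \le p+1$ from Theorem~\ref{thm:scheme:bound} is an \emph{a priori} bound, and the actual number of states — while finite — grows with $p$, so that for the larger primes near $200$ the scheme can become sizable and the naive automatic-scheme computation may be infeasible. This is precisely where scaling schemes pay off: as noted in Section~\ref{sec:valuations:intro}, computing a scaling scheme is at least as fast as computing an automatic one and in practice is considerably faster with dramatically fewer states, which is what brings the primes up to $200$ within reach. A secondary subtlety is ensuring the symmetry reduction of Example~\ref{eg:motzkin:symmetry} is applied correctly so that the $Q_i$ are normalized to low-degree $0$; handling this uniformly across all primes is what keeps the number of distinct states manageable and the final inspection routine.
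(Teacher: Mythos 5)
Your proposal follows essentially the same route as the paper's proof: there, too, one computes a scaling $p$-scheme for $M(n)$ modulo $p^2$ from the constant term representation \eqref{eq:motzkin:ct}, derives from it the much smaller scheme for $p^{\nu_p(M(n))}$ modulo $p^2$ as in Section~\ref{sec:valuations:intro}, and inspects that scheme to decide whether the value $0$ modulo $p^2$ is attained, with the computation carried out in the Sage implementation of Section~\ref{sec:cas} for all primes $p<200$. One caution on your point (iii): the assertion that the attained values are precisely the state values is true only for \emph{automatic} schemes, not scaling ones --- in a scaling scheme the value of $M(n)$ is the terminal state's initial condition multiplied by the accumulated transition factors along the path, so $0 \pmod{p^2}$ can be attained through two factors of valuation $1$ even when no state value is $0$ --- but since your final check is phrased via the accumulated valuation along the path and the associated automatic scheme, the argument goes through exactly as in the paper.
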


\begin{proof}
  Let $p$ be any prime number. Using the constant term representation
  \eqref{eq:motzkin:ct}, we proceed by computing a scaling $p$-scheme for the
  Motzkin numbers $M (n)$ modulo $p^2$. As described above, we then use this
  scheme to construct a (considerably simpler) $p$-scheme for $p^{\nu_p (M
  (n))}$ modulo $p^2$. Inspection of that scheme makes it straightforward to
  test whether there exists an index $n$ such that $p^{\nu_p (M (n))} \equiv
  0$ modulo $p^2$. The latter is equivalent to testing whether $M (n) \equiv
  0$ modulo $p^2$. We implemented this approach in the computer algebra system
  Sage, described in Section~\ref{sec:cas}, and carried out the computations
  for all primes $p < 200$. Further details of this computation are included
  in Example~\ref{eg:motzkin:p2:sage}.
\end{proof}

As noted above, the cases $p = 5$ and $p = 13$ of Theorem~\ref{thm:motzkin:p2}
were already established in \cite[Theorem~3.8 \& 3.9]{ry-diag13}. Rowland
and Yassawi report that the computation modulo $13^2$ took about 40 minutes.
On our basic laptop, Rowland's impressive and more recent implementation
\cite{rowland-is} reduces this time to about 2.5 minutes when using
diagonals as in \cite{ry-diag13} and further to 30 seconds when using
constant terms as in \cite{rz-cong}. On the other hand, the computation
described in the proof of Theorem~\ref{thm:motzkin:p2} using a scaling
$13$-scheme only requires about half a second on the same laptop. Since
performing our calculations, we have further learned that Rowland has
independently established the cases $p = 31$ and $p = 37$ of
Theorem~\ref{thm:motzkin:p2} by using \cite{rowland-is} to compute automatic
$p$-schemes for $M (n)$ modulo $p^2$. These automatic $p$-schemes are rather
complex with $28 \comma 081$ and $44 \comma 173$ states, respectively. On the
other hand, the corresponding scaling $p$-schemes in our computation only have
$125$ and $149$ states, respectively, making it possible to calculate them in
less than a minute. It is this reduction, which becomes more pronounced as the
size of $p$ increases, in the number of states when using scaling over
automatic schemes that made it feasible to compute $p$-schemes for $M (n)$
modulo $p^2$ for all primes below $200$ (that arbitrary limit could be pushed
further but we hope that it suffices to convince the reader of the utility of
computing scaling schemes).

\begin{example}
  For $p = 83$, the first Motzkin number that is divisible by $p^2$ is
  \begin{equation*}
    M (5 \comma 139 \comma 193) = 2 \comma 051 \comma 827 \comma 558 \comma
     749 \comma \ldots \ldots \comma
     008 \comma 702 \comma 105 \comma 903,
  \end{equation*}
  where the right-hand side is an integer with $2 \comma 452 \comma 009$
  decimal digits. This indicates the difficulty of predicting based on initial
  terms whether, given a prime $p$, there is a Motzkin number divisible by
  $p^2$. Of course, in the absence of further insight (such as an upper
  bound), computing initial terms by itself can only identify those primes $p$
  for which there exists a Motzkin number divisible by $p^2$. The computation
  of an automatic or scaling scheme modulo $p^2$, on the other hand,
  straightforwardly settles this question in either case.
\end{example}

It would be of interest to analyze the $p$-schemes for Motzkin numbers modulo
$p^2$ in hopes of discovering a characterization of those $p$ for which no
Motzkin number is divisible by $p^2$. We do not pursue this here since our
focus is on our ability to compute these $p$-schemes in practice. As we have
demonstrated, using scaling $p$-schemes over automatic ones allows us to
compute instances that were previously out of reach.

\section{A computer algebra implementation}\label{sec:cas}

\subsection{Basic usage}

In order to perform the computations described in Section~\ref{sec:motzkin:p2}
(which to our knowledge are not within reach of previous implementations), we
implemented the algorithm described in Section~\ref{sec:schemes} in the
open-source computer algebra system Sage \cite{sage2021}. Usage of this
implementation is briefly described in this section. First, however, we note
that Rowland and Zeilberger \cite{rz-cong} provide an implementation in
Maple for computing automatic and linear congruence schemes for the modulo
$p^r$ values of constant terms. Moreover, Rowland's powerful Mathematica
package \emph{IntegerSequences} \cite{rowland-is} offers, among many other
tools for working with $k$-regular sequences, methods for computing finite
state automata representing the modulo $p^r$ values of sequences (represented
in various ways, including as constant terms or diagonals). A subset of the
algorithms of \cite{rz-cong} have also been implemented by Joel Henningsen
in Sage as part of his master's thesis \cite{henningsen-msc} under the
direction of the author. The performance and design lessons learned from
Henningsen's work have benefitted the present implementation which is freely
available at:
\begin{center}
  \url{http://arminstraub.com/congruenceschemes} 
\end{center}
To use the package from within a recent version of Sage, we need to import its
functionality:
\begin{codeblock}
  \codein{from congruenceschemes import *}
\end{codeblock}
Before turning to more advanced applications, we illustrate the basic usage by
showing how the congruence schemes from the introductory
Examples~\ref{eg:catalan:3:linear}, \ref{eg:catalan:3:automatic} and
\ref{eg:catalan:3:multiplicative} can be computed.

\begin{example}
  Using the constant term representation \eqref{eq:catalan:ct} for the Catalan
  numbers, we can compute a linear $3$-scheme for the Catalan numbers $C (n)$
  modulo $3$ as follows:
  \begin{codeblock}
    \codein{R.<x> = LaurentPolynomialRing(Zmod(3))}
    \codeinout{S = CongruenceScheme(1/x+2+x, 1-x); S}{Linear 3-scheme with 2 states over Ring of integers modulo 3}
  \end{codeblock}
  The resulting $3$-scheme is the one described in
  Example~\ref{eg:catalan:3:linear}. In the implementation, the initial
  conditions and transitions (spelled out explicitly in
  Example~\ref{eg:catalan:3:linear}) are encoded as follows:
  \begin{codeblock}
    \codeinout{S.initial\_conds()}{[1, 0]}
    \codeinout{S.transitions()}{[[\{0: 1, 1: 1\}, \{0: 1, 1: 1\},
    \{0: 2, 1: 1\}],\newline
    \phantom{[}[\{\}, \{0: 1, 1: 1\}, \{0: 1, 1: 2\}]]}
  \end{codeblock}
  Note that the transitions consist of two lists (one on each line in the
  above output), corresponding to the two states $A_0, A_1$. Each list has
  three entries corresponding to the transitions $3 n + j$ for $j \in \{ 0, 1,
  2 \}$. For instance, the entry \codeinline{\{0: 2, 1: 1\}} encodes the
  transition $A_0 (3 n + 2) = 2 A_0 (n) + A_1 (n)$.
\end{example}

\begin{example}
  The automatic $3$-scheme from Example~\ref{eg:catalan:3:automatic} can be
  likewise computed:
  \begin{codeblock}
    \codein{S = CongruenceSchemeAutomatic(1/x+2+x, 1-x)}
    \codeinout{S.initial\_conds()}{[1, 1, 2, 2]}
    \codeinout{S.transitions()}{[[\{1: 1\}, \{1: 1\}, \{2: 1\}],
    [\{1: 1\}, \{3: 1\}, \{\}],\newline
    \phantom{[}[\{3: 1\}, \{\}, \{2: 1\}], [\{3: 1\}, \{1: 1\}, \{\}]]}
  \end{codeblock}
  In contrast to the previous example, we now have four states rather than
  two. The four lists (two on each line in the above final output) correspond
  directly to the transitions spelled out in
  Example~\ref{eg:catalan:3:automatic}.
\end{example}

\begin{example}
  In the same manner, we can compute the three-state scaling $3$-scheme from
  Example~\ref{eg:catalan:3:multiplicative}:
  \begin{codeblock}
    \codein{S = CongruenceSchemeScaling(1/x+2+x, 1-x)}
    \codeinout{S.initial\_conds()}{[1, 1, 1]}
    \codeinout{S.transitions()}{[[\{1: 1\}, \{1: 1\}, \{2: 2\}],
    [\{1: 1\}, \{1: 2\}, \{\}], [\{1: 1\}, \{\}, \{2: 1\}]]}
  \end{codeblock}
\end{example}

\subsection{Numbers of states}

The present implementation tends to produce congruence schemes with fewer
states than the Maple implementation accompanying \cite{rz-cong} because it
implements certain ad-hoc optimizations such as, most notably, the
exploitation of symmetry described in Example~\ref{eg:motzkin:symmetry}. A
valuable avenue for future work would be to systematically study and implement
further optimizations.

\begin{example}
  \label{eg:motzkin:2r:sage}For instance, automatic $2$-schemes for the
  Motzkin numbers modulo $2^r$ are computed in \cite{rz-cong}, for $r \in \{
  1, 2, \ldots, 5 \}$, with Table~\ref{tbl:motzkin:2r:states} listing the
  number of states of the resulting schemes.
  
  \begin{table}[h]
    \begin{center}
      $\begin{array}{|r||l|c|c|c|c|c|c|c|c|}
         \hline
         r & 1 & 2 & 3 & 4 & 5 & 6 & 7 & 8\\
         \hline\hline
         \text{implementation in \cite{rz-cong}} & 4 & 24 & 128 & 801 & 5093 & > 10^4 &  & \\
         \hline
         \text{present implementation} & 4 & 14 & 24 & 76 & 225 & 701 & 2810 & 8090\\
         \hline
       \end{array}$
    \end{center}
    \caption{\label{tbl:motzkin:2r:states}Number of states in automatic
    schemes modulo $2^r$ for Motzkin numbers.}
  \end{table}
  
  Table~\ref{tbl:motzkin:2r:states} also lists the number of states of the
  schemes when computed using our implementation. These numbers can be
  obtained (in about 90 seconds on a basic laptop) as follows:
  \begin{codeblock}
    \codein{R.<x> = LaurentPolynomialRing(ZZ)}
    \codein{P, Q = 1/x+1+x, 1-x\^{}2}
    \codein{schemes = [CongruenceSchemeAutomatic(P, Q, p=2, r=r) for r in [1..8]]}
    \codeinout{[S.nr\_states() for S in schemes]}{[4, 14, 24, 76, 225, 701, 2810, 8090]}
  \end{codeblock}
  As pointed out in Example~\ref{eg:catalan:3:automatic}, the corresponding
  finite state automata may have one additional state (representing $0$). The
  counts for these automata are:
  \begin{codeblock}
    \codeinout{[S.nr\_states\_automaton() for S in schemes]}{[5, 15, 24, 76, 225, 701, 2810, 8090]}
  \end{codeblock}
  These counts match the number of states of the finite state automata for
  Motzkin numbers modulo $2^r$ that we computed using Rowland's Mathematica
  package \cite{rowland-is}. Indeed, these numbers of states are best
  possible because both Rowland's Mathematica implementation and our Sage
  implementation minimize the computed finite state automata in an additional
  (optional) post-processing phase (our implementation presently employs
  Moore's algorithm \cite{moore-machines} for this purpose).
\end{example}

\begin{question}
  \label{q:motzkin:2r}Can we give an exact (or asymptotic) formula for the
  sequence
  $5$, $15$, $24$, $76$, $225$, $701$, $2810$, $8090$, $\ldots$
  of the minimal numbers of states for finite state automata for Motzkin
  numbers modulo $2^r$?
\end{question}

The corresponding question for linear (or scaling) $2$-schemes for Motzkin
numbers modulo $2^r$ is equally interesting and, possibly, more natural. In
this direction, we recall from Example~\ref{eg:bound:motzkin:pr} that there is
a linear $2$-scheme for the Motzkin numbers modulo $2^r$ with at most $2^r +
1$ many states (and, for small $r$, such a scheme can be computed using our
implementation). It is natural to wonder whether it is possible to further
reduce the number of states needed for these schemes.

In order to investigate such questions systematically, it would be valuable to
extend the minimization of automatic schemes to the case of scaling and linear
schemes, as well as to analyze the computational cost of doing so. Especially
in the case of scaling schemes, Moore's algorithm \cite{moore-machines} (and
other known minimization algorithms) can likely be adapted for this purpose
but we do not pursue this question here.

\subsection{Fast evaluation of sequences modulo \texorpdfstring{$m$}{m}}

As pointed out by Rowland and Zeilberger \cite{rz-cong}, one application of
congruence schemes is the fast evaluation of the underlying sequence modulo
$p^r$ (and, hence, modulo any $m$ by virtue of the Chinese remainder theorem).

\begin{example}
  As an example, it is shown in \cite{rz-cong} that $M (10^{100})$, the
  googol-th Motzkin number, is $12$ modulo $25$. The following confirms this
  computation:
  \begin{codeblock}
    \codein{R.<x> = LaurentPolynomialRing(Zmod(25))}
    \codein{S = CongruenceScheme(1/x+1+x, 1-x\^{}2)}
    \codeinout{S.nth\_term(10\^{}100)}{$12$}
  \end{codeblock}
\end{example}

\begin{example}
  If we are able to evaluate a sequence modulo prime powers in a fast manner,
  the Chinese remainder theorem allows us to evaluate the sequence modulo any
  modulus. For illustration, it is computed in \cite{rz-cong} in logarithmic
  time that $M (10^{100}) \equiv 187$ modulo $1000$, extending the computation
  of the previous example. We further extend this computation and determine $M
  (10^{100})$ modulo $10^5$:
  \begin{codeblock}
    \codein{R.<x> = LaurentPolynomialRing(ZZ)}
    \codein{S2 = CongruenceScheme(1/x+1+x, 1-x\^{}2, p=2, r=5)}
    \codein{S5 = CongruenceScheme(1/x+1+x, 1-x\^{}2, p=5, r=5)}
    \codeinout{S2.nth\_term(10\^{}100).crt(S5.nth\_term(10\^{}100))}{$27187$}
  \end{codeblock}
  Accordingly, the last five decimal digits of the googol-th Motzkin number
  are $27187$. These computations took about a minute, with all but a fraction
  of a second spent on the computation of the congruence scheme modulo $5^5$.
\end{example}

\subsection{Determining forbidden residues}

Rowland and Yassawi \cite{ry-diag13} give several intriguing examples of
sequences that avoid certain residues modulo $p^r$. Such results are often
rather hard to obtain by hand but are automatic to prove by computing an
automatic $p$-scheme for the sequence of interest modulo $p^r$ (or can be
deduced with a little more effort from a scaling $p$-scheme). Here, we
restrict ourselves to reproducing, and in one case slightly extending, two of
these results using our implementation.

\begin{example}
  Chowla, J.~Cowles and M.~Cowles \cite{ccc-apery} conjectured, and Gessel
  \cite{gessel-super} proved, that the Ap\'ery numbers
  \eqref{eq:apery3:ct} associated to $\zeta (3)$ are periodic modulo $8$
  alternating between the values $1$ and $5$. Based on the constant term
  representation \eqref{eq:apery3:ct}, the following confirms that the
  Ap\'ery numbers $A (n)$ modulo $8$ only take the values $1$ and $5$:
  \begin{codeblock}
    \codein{R.<x,y,z> = LaurentPolynomialRing(ZZ)}
    \codein{P = ((x+y)*(1+z)*(x+y+z)*(1+y+z))/x/y/z}
    \codeinout{S = CongruenceSchemeAutomatic(P, p=2, r=3); S}{Linear 2-scheme with 3 states over Ring of integers modulo 8}
    \codeinout{S.possible\_values()}{\{1, 5\}}
  \end{codeblock}
  Moreover, as is done in \cite{ry-diag13}, an inspection of the
  (particularly simple) automaton immediately reveals that $A (2 n) \equiv 1$
  and $A (2 n + 1) \equiv 5$ modulo~$8$.
\end{example}

In general, however, as is illustrated by the next example, no such simple
characterizations of the values modulo $p^r$ are possible. Still, automatic
(or scaling) congruence schemes can readily be used to determine exactly which
residues modulo $p^r$ are attained by a given sequence.

\begin{example}
  \label{eg:catalan:2r}Rowland and Yassawi \cite{ry-diag13} observe that
  certain residues modulo $2^r$ are never attained by the Catalan numbers $C
  (n)$. For instance:
  \begin{itemize}
    \item $C (n) \nequiv 3 \pmod{4}$,
    \item $C (n) \nequiv 9 \pmod{16}$,
    \item $C (n) \nequiv 17, 21, 26 \pmod{32}$,
    \item $C (n) \nequiv 10, 13, 33, 37 \pmod{64}$.
  \end{itemize}
  These results can be confirmed with the following computation:
  \begin{codeblock}
    \codein{R.<x> = LaurentPolynomialRing(Zmod(2\^{}6))}
    \codein{S = CongruenceSchemeScaling(1/x+2+x, 1-x)}
    \codeinout{S.impossible\_values()}{\{3, 7, 9, 10, 11, 13, 15, 17,
    19, 21, 23, 25, 26, 27, 31, 33, 35, 37, 39, 41, 43, 47, 49, 51, 53, 55,
    57, 58, 59, 63\}}
    \codeinout{len(S.impossible\_values())}{30}
  \end{codeblock}
  This shows, in particular, that the Catalan numbers do not attain $30 / 2^6
  = 46.875\%$ of the residues modulo $2^6$. Based on the corresponding
  computations modulo $2^r$ for $r \leq 9$, Rowland and Yassawi
  \cite{ry-diag13} pose the question whether the proportion of residues that
  are not attained by the Catalan numbers modulo $2^r$ tends to $1$ as $r
  \rightarrow \infty$. The proportions for $r \leq 14$ are recorded in
  Table~\ref{tbl:C:2} labeled as $P (r)$. Further listed are the total number
  $N (r)$ of residues not attained by the Catalan numbers modulo $2^r$ as well
  as the number $A (r) = N (r) - 2 N (r - 1)$ of additional residues not
  attained (observe that, if $C (n) \nequiv a$ modulo $2^{r - 1}$, then we
  necessarily have $C (n) \nequiv a$ modulo $2^r$; $A (r)$ counts those
  residues not covered by this observation; for instance, $A (6) = 4$
  corresponding to the residues $10, 13, 33, 37$ not attained modulo $2^6$, as
  listed above).
  
  \begin{table}[h]
    \begin{center}
      \scalebox{1}{$\begin{array}{|r||r|r|r|r|r|r|r|r|r|r|r|r|r|r|r|}
        \hline
        r & 1 & 2 & 3 & 4 & 5 & 6 & 7 & 8 & 9 & 10 & 11 & 12 & 13 & 14\\
        \hline\hline
        P (r) & 0 & .25 & .25 & .31 & .41 & .47 & .54 & .59 & .65 & .69 & .73 & .76 & .79 & .82\\
        \hline
        N (r) & 0 & 1 & 2 & 5 & 13 & 30 & 69 & 152 & 332 & 710 & 1502 & 3133 & 6502 & 13 \comma 394\\
        \hline
        A (r) & 0 & 1 & 0 & 1 & 3 & 4 & 9 & 14 & 28 & 46 & 82 & 129 & 236 & 390\\
        \hline
      \end{array}$}
    \end{center}
    \caption{\label{tbl:C:2}The proportions and numbers of residues not
    attained by Catalan numbers modulo $2^r$.}
  \end{table}
  
  The values for $r \leq 9$ match those computed by Rowland and Yassawi
  \cite{ry-diag13}, while we computed the new values for $10 \leq r
  \leq 14$ using our implementation (in about 3 hours).
  
  Rowland and Yassawi \cite{ry-diag13} further pose the question whether
  there exist any residues modulo $3^r$ that are not attained by the Catalan
  numbers. Proceeding as above, we are able to compute a scaling $3$-scheme
  for the Catalan numbers modulo $3^9$ (in about 20 hours). That scheme then
  allows us to deduce that the Catalan numbers attain all residues modulo
  $3^9$.
\end{example}

\subsection{Computing valuation schemes}

Let us demonstrate how to compute the automatic $2$-scheme for the $2$-adic
valuation of the Motzkin numbers that we employed in the proof of
Theorem~\ref{thm:motzkin:v2}.

\begin{example}
  \label{eg:motzkin:v2:sage}First, we compute an automatic $2$-scheme for the
  Motzkin numbers modulo $8$ as follows:
  \begin{codeblock}
    \codein{R.<x> = LaurentPolynomialRing(Zmod(8))}
    \codeinout{S = CongruenceSchemeAutomatic(1/x+1+x, 1-x\^{}2); S}{Linear 2-scheme with 24 states over Ring of integers modulo 8}
    \codeinout{S.impossible\_values()}{\{0\}}
  \end{codeblock}
  The output is a scheme with $24$ states that certifies that no Motzkin
  number $M (n)$ is divisible by $8$, as conjectured by Amdeberhan, Deutsch
  and Sagan \cite[Conjecture~5.5]{ds-cong} and proven by Eu, Liu and Yeh
  \cite{ely-mod8} as well as Rowland and Yassawi \cite{ry-diag13}. On the
  other hand, every other value modulo $8$ is achieved. We then derive from
  this scheme, as described in Section~\ref{sec:valuations:intro}, a scheme
  for $2^{\nu_2 (M (n))}$:
  \begin{codeblock}
    \codeinout{V = S.valuation\_scheme(); V}{Linear 2-scheme with 10 states over Ring of integers modulo 8}
    \codeinout{V.initial\_conds()}{[1, 1, 1, 1, 2, 4, 2, 4, 2, 4]}
    \codeinout{V.transitions()}{[[\{1: 1\}, \{2: 1\}], [\{3: 1\},
    \{4: 1\}], [\{3: 1\}, \{5: 1\}], [\{3: 1\}, \{3: 1\}],\newline
    \phantom{[}[\{6: 1\}, \{1: 1\}], [\{7: 1\}, \{2: 1\}], [\{8: 1\}, \{9: 1\}], [\{9: 1\}, \{8: 1\}], \newline
    \phantom{[}[\{8: 1\}, \{8: 1\}], [\{9: 1\}, \{9: 1\}]]}
  \end{codeblock}
  Relabeling the values of the initial conditions from $1, 2, 4$ to $0, 1, 2$,
  respectively, results in an automatic $2$-scheme for the $2$-adic valuations
  of the Motzkin numbers. Indeed, this scheme directly translates into the
  finite state automaton in Figure~\ref{fig:motzkin:v2} (where the four
  vertically centered states correspond to states $0, 3, 8, 9$ in the above
  scheme), which we used to prove Theorem~\ref{thm:motzkin:v2}.
\end{example}

As another application of computing valuation schemes, let us demonstrate how
to prove Theorem~\ref{thm:motzkin:p2} for the prime $p = 13$.

\begin{example}
  \label{eg:motzkin:p2:sage}The case $p = 13$ of Theorem~\ref{thm:motzkin:p2}
  claims that no Motzkin number is divisible by $13^2$. Rowland and Yassawi
  \cite{ry-diag13} prove this claim using an automatic $13$-scheme for the
  Motzkin numbers modulo $13^2$. To perform this calculation using our
  implementation we can compute:
  \begin{codeblock}
    \codein{R.<x> = LaurentPolynomialRing(Zmod(13\^{}2))}
    \codeinout{S = CongruenceSchemeAutomatic(1/x+1+x, 1-x\^{}2); S}{Linear 13-scheme with 2097 states over Ring of integers modulo 169}
    \codeinout{S.impossible\_values()}{\{0\}}
  \end{codeblock}
  The last output confirms that, indeed, $M (n) \nequiv 0$ modulo $13^2$ for
  all $n$.
  
  In principle, the same approach could be used for any prime. However, the
  above computation, which takes a little over 10 seconds on a typical laptop
  (a slight improvement on the 30 seconds we needed for the same computation
  on the same laptop using Rowland's Mathematica implementation
  \cite{rowland-is}, which considerably improves on the 40 minutes reported
  in \cite{ry-diag13}), for larger primes $p$ quickly becomes impractical
  even on much more powerful machines. Instead, as described in the proof of
  Theorem~\ref{thm:motzkin:p2}, we first compute a scaling $13$-scheme for the
  Motzkin numbers modulo $13^2$, which only takes about half a second:
  \begin{codeblock}
    \codeinout{S = CongruenceSchemeScaling(1/x+1+x, 1-x\^{}2); S}{Linear 13-scheme with 48 states over Ring of integers modulo 169}
  \end{codeblock}
  We could again determine the impossible values from here but, especially for
  larger primes, it is more efficient to derive from the above scheme a scheme
  for $13^{\nu_{13} (M (n))}$ modulo $13^2$:
  \begin{codeblock}
    \codeinout{V = S.valuation\_scheme(); V}{Linear 13-scheme with 5 states over Ring of integers modulo 169}
    \codeinout{V.possible\_values()}{\{1, 13\}}
  \end{codeblock}
  The final output certifies that $13^{\nu_{13} (M (n))}$ only takes the
  values $1$ or $13$ modulo $13^2$. Accordingly, $M (n) \nequiv 0$ modulo
  $13^2$.
\end{example}

To prove Theorem~\ref{thm:motzkin:p2}, we performed these computations for all
primes $p < 200$. As indicated, for larger primes it becomes computationally
imperative to initially compute a scaling (rather than an automatic)
$p$-scheme for the Motzkin numbers modulo $p^2$. For $p = 61$, the first
previously open case, the computation took about 10 minutes on a basic laptop,
while the case $p = 197$ required about 3 days of computation.

\section{Conclusion}\label{sec:conclusion}

For the sake of exposition, we have focused on constant term sequences
\eqref{eq:ct:pq} though the general ideas, such as the utility of scaling
schemes, apply in the same manner to sequences that are diagonals of rational
functions. Constant term sequences, that is, sequences of the form $a (n) =
\operatorname{ct} [P (\boldsymbol{x})^n Q (\boldsymbol{x})]$ for Laurent polynomials $P, Q
\in \mathbb{Z} [\boldsymbol{x}^{\pm 1}]$, can always be expressed as diagonals
of rational functions. As Zagier \cite[p.~769, Question~2]{zagier-de} and
Gorodetsky \cite{gorodetsky-ct} do in the case $Q = 1$, it is therefore
natural to ask which diagonal sequences are constant term sequences
\eqref{eq:ct:pq}. This appears to be a difficult problem, even for specific
sequences. As an initial challenge, we invite the interested reader to
consider the following:

\begin{question}
  Can the Fibonacci numbers $F_n$, the diagonal sequence of $x / (1 - x -
  x^2)$, be expressed as a constant term sequence? That is, are there $P, Q
  \in \mathbb{Z} [\boldsymbol{x}^{\pm 1}]$ such that $F_n = \operatorname{ct} [P
  (\boldsymbol{x})^n Q (\boldsymbol{x})]$?
\end{question}

We observe that the Fibonacci numbers cannot be so expressed with $Q = 1$
(because they fail to satisfy the Gauss congruences \cite{bhs-gauss}).

On the other hand, diagonals of rational functions are somewhat better
understood due to recent results by Bostan, Lairez and Salvy \cite{bls-mbs}
who show, among other results, that these can be characterized as sequences
expressible as multiple binomial sums.

One of the motivations for being able to efficiently compute congruence
schemes is that it enables us to observe new phenomena which would otherwise
be more difficult to observe. For instance, even in the case of the very
well-studied Catalan numbers, Rowland and Yassawi \cite{ry-diag13} reveal
intriguing new questions by computing congruence schemes. For instance, as
indicated in Example~\ref{eg:catalan:2r}, Rowland and Yassawi
\cite{ry-diag13} pose the question whether the proportion of residues that
are not attained by the Catalan numbers modulo $2^r$ tends to $1$ as $r
\rightarrow \infty$.

Rowland and Yassawi \cite{ry-diag13} further note that some residues are
only attained finitely many times. For instance, $C (n) \nequiv 1 \pmod{8}$ for $n \geq 2$, and $C (n) \nequiv 5, 10 \pmod{16}$ for $n \geq 6$. On the other hand, we presently lack the
tools to establish similar results modulo $m$ if $m$ is not a prime power.
This is illustrated, in the case $m = 10$, by the following conjecture due to
Alin Bostan, observed in 2015 and popularized at the 80th S\'eminaire
Lotharingien de Combinatoire in 2018.

\begin{conjecture}[Bostan, 2015]
  \leavevmode
  \begin{enumerate}
    \item For all $n \geq 0$, $C (n) \nequiv 3 \pmod{10}$.
    \item For sufficiently large $n$, $C (n) \nequiv 1, 7, 9 \pmod{10}$.
  \end{enumerate}
\end{conjecture}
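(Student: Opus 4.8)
The plan is to split the modulus via the Chinese remainder theorem. Since $10 = 2 \cdot 5$, the residue $C(n) \bmod 10$ is determined by the pair $(C(n) \bmod 2, C(n) \bmod 5)$, and both components are accessible through the congruence schemes of Section~\ref{sec:schemes}. The decisive first reduction uses the two-state $2$-scheme for $C(n)$, which encodes the classical fact that $C(n)$ is odd exactly when $n = 2^k - 1$. Consequently every odd residue modulo $10$ — in particular the residues $3$ and $1,7,9$ relevant to the conjecture — can arise only at indices $n = 2^k - 1$, so the whole statement collapses to the behaviour of the single subsequence $C(2^k - 1) \bmod 5$. Concretely, $C(2^k-1) \equiv 3 \pmod{10}$ iff $C(2^k - 1) \equiv 3 \pmod 5$, while $C(2^k - 1) \in \{1, 7, 9\} \pmod{10}$ iff $C(2^k - 1) \in \{1, 2, 4\} \pmod 5$.

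Next I would compute $C(2^k - 1) \bmod 5$ using Kummer's and Lucas's theorems. Writing $m = 2^k - 1$, the factor $m + 1 = 2^k$ is a $5$-adic unit, so $\nu_5(C(m)) = \nu_5\binom{2m}{m}$ equals the number of carries produced when adding $m$ to itself in base $5$; this vanishes precisely when every base-$5$ digit of $m$ lies in $\{0, 1, 2\}$. In that carry-free case Lucas's theorem gives $\binom{2m}{m} \equiv 2^{a} \pmod 5$, where $a$ is the number of base-$5$ digits of $m$ equal to $1$ (using $\binom{2}{1} \equiv 2$ and $\binom{0}{0} \equiv \binom{4}{2} \equiv 1$), and hence $C(m) \equiv 2^{a - k} \pmod 5$. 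Thus the residues $0$ and the nonzero classes behave very differently: a carry forces $C(m) \equiv 0$, while carry-free indices give a power of $2$ modulo $5$.

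For part~(a) this already does most of the work. When there is a carry we have $C(m) \equiv 0 \nequiv 3 \pmod 5$, so only the carry-free case remains, where $C(m) \nequiv 3 \pmod 5$ becomes $2^{a - k} \nequiv 2^3 \pmod 5$, i.e. $a - k \not\equiv 3 \pmod 4$. Since $5 \equiv 1 \pmod 4$ and $m \equiv 3 \pmod 4$ for $k \geq 2$, the base-$5$ digit sum obeys $\sum_i d_i = a + 2b \equiv 3 \pmod 4$, where $b$ counts the digits equal to $2$; in particular $a$ is odd. For odd $k$ this forces $a - k$ to be even, hence $\not\equiv 3 \pmod 4$, and part~(a) follows at once. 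For even $k$ the claim reduces to a parity condition on $b$, which I would attempt to pin down by refining the digit-sum congruence modulo $8$ and $16$ (using $5^i \equiv 1, 5 \pmod 8$ according to the parity of $i$).

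The real obstacle is part~(b). By the dichotomy above, the carry-free indices are exactly those for which $C(2^k - 1) \in \{1, 2, 4\} \pmod 5$, i.e. $C(2^k - 1) \in \{1, 7, 9\} \pmod{10}$, so part~(b) is equivalent to the assertion that $2^k - 1$ has a base-$5$ digit $\geq 3$ for all sufficiently large $k$ — equivalently, that the carry-free set $\{k : \text{all base-}5\text{ digits of } 2^k - 1 \text{ are} \le 2\}$ (which begins $0,1,3,5,7,8$) is finite. This is a powers-of-two digit-avoidance problem in base $5$, directly analogous to Erd\H{o}s's notoriously open conjecture that $2^k$ contains the digit $2$ in base $3$ for every $k > 8$. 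Stewart-type lower bounds force the number of nonzero base-$5$ digits of $2^k$ to tend to infinity, but they do not force a digit as large as $3$, and I do not expect the finite-automaton machinery of this paper to reach the statement: the difficulty is precisely the incompatibility between the base-$2$ structure of the indices $2^k - 1$ and the base-$5$ structure governing $C(n) \bmod 5$, a Cobham-type obstruction that mirrors the paper's own remark that we lack tools for moduli $m$ that are not prime powers. I would therefore expect to settle part~(a) and the reduction of part~(b) unconditionally, while part~(b) itself appears to require genuinely new input on the base-$5$ digits of powers of $2$.
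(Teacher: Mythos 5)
The first thing to say is that the paper does not prove this statement at all: it appears as an open conjecture attributed to Bostan, immediately after the remark that ``we presently lack the tools to establish similar results modulo $m$ if $m$ is not a prime power.'' So there is no proof in the paper to compare against, and your attempt has to stand on its own. Judged that way, your reduction is the natural one and is correct as far as it goes: $C(n)$ is odd exactly for $n = 2^k - 1$, so the conjecture collapses to $C(2^k-1) \bmod 5$; since $m + 1 = 2^k$ is a $5$-adic unit, Kummer gives $\nu_5(C(m)) = 0$ exactly when every base-$5$ digit of $m = 2^k-1$ lies in $\{0,1,2\}$, and Lucas then gives $C(m) \equiv 2^{a-k} \pmod 5$ with $a$ the number of $1$-digits (the values $\binom{0}{0} \equiv \binom{4}{2} \equiv 1$, $\binom{2}{1} \equiv 2$ check out, as does your carry-free set $0,1,3,5,7,8,\ldots$). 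Your identification of part~(b) with the finiteness of this carry-free set --- an Erd\H{o}s-type digit-avoidance problem for powers of $2$ in base $5$ --- and your judgment that this is a Cobham-type base-$2$-versus-base-$5$ obstruction beyond the automaton machinery are both sound and match the paper's own framing of why the statement is left as a conjecture.

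The genuine gap is your claim that part~(a) can be settled unconditionally. Your parity argument is correct for odd $k$: $m \equiv 3 \pmod 4$ and $5 \equiv 1 \pmod 4$ force $a + 2b \equiv 3 \pmod 4$, so $a$ is odd and $a - k \nequiv 3 \pmod 4$. But the even-$k$ case is not a loose end to be mopped up ``modulo $8$ and $16$'' --- it is where the difficulty lives. You can cheaply exclude $k \equiv 2 \pmod 4$, since then $2^k - 1 \equiv 3 \pmod 5$, the last digit is $3$, and the index is not carry-free; but carry-free $k \equiv 0 \pmod 4$ exist (e.g.\ $k = 8$, where $2^8 - 1 = 255 = 2010_5$), and there what you need is precisely that the number $b$ of $2$-digits is odd whenever all digits are $\leq 2$. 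The congruences you propose only constrain position-weighted digit sums --- e.g.\ modulo $8$ one gets $a + 2b + 4 s_{\mathrm{odd}} \equiv 7$, with $s_{\mathrm{odd}}$ the digit sum over odd positions --- and knowing $a + 2b$ modulo higher powers of $2$ never separates $a$ from $b$, so $b \bmod 2$ cannot be isolated this way; moreover, the first $j$ base-$5$ digits of $2^k - 1$ depend only on $k \bmod 4 \cdot 5^{j-1}$, so no finite congruence check on $k$ can terminate the analysis. In short, the even-$k$ case of part~(a) is a digit statement of exactly the same flavor as part~(b), and your proposal proves neither part; what it does establish is a correct and genuinely useful reduction (part~(a) for odd $k$, and the equivalence of the full conjecture to base-$5$ digit assertions about $2^k - 1$), which is entirely consistent with the paper's decision to record the statement as open.
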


In particular, this conjecture implies that the last digit of any sufficiently
large odd Catalan number is always $5$.

\subsection*{Acknowledgements}

The author thanks Alin Bostan for interesting discussions on diagonals and
constant terms, as well as for sharing the final conjecture. Support through a
Collaboration Grant from the Simons Foundation (\#514645) is gratefully
acknowledged. The author further thanks the anonymous referees for their
careful and helpful comments and suggestions.

\subsection*{Data availability statement}

All data generated or analysed as part of this work has been produced, and can
be reproduced, using the author's Sage package that is freely available at:\\
\url{http://arminstraub.com/congruenceschemes}


\end{document}